\DeclarePairedDelimiterX\set[1]\{\}{\nonscript\,#1\nonscript\,}
\newcommand{\tcr}{}
\newcommand{\TheTitle}{Nonsymmetric Algebraic Multigrid Based\\on Local Approximate Ideal Restriction ($\ell$AIR)}
\newcommand{\TheAuthors}{T. A. Manteuffel, J. Ruge, and B. S. Southworth}
\title{{\TheTitle}\thanks{This research was conducted with Government support under and awarded by DoD, Air Force Office of Scientific Research, National Defense Science and Engineering Graduate (NDSEG) Fellowship, 32 CFR 168a; and under
the auspices of the U.S. Department of Energy under grant numbers (SC) DE-FC02-03ER25574 and (NNSA) DE-NA0002376, and
  Lawrence Livermore National Laboratory under contract B614452.
  }}
\author{  Thomas~A.~Manteuffel
  \thanks{Department of Applied Mathematics,
          University of Colorado at Boulder.}
  \and
  John Ruge
  \footnotemark[2]
  \and
  Ben~S.~Southworth
  \thanks{Department of Applied Mathematics,
          University of Colorado at Boulder
          (\email{ben.s.southworth@gmail.com}).}
}
\ifpdf\hypersetup{  pdftitle={\TheTitle},
  pdfauthor={\TheAuthors}
}
\begin{document}
\maketitle

\begin{abstract}
Algebraic multigrid (AMG) solvers and preconditioners are some of the fastest numerical methods to solve linear systems,
particularly in a parallel environment, scaling to hundreds of thousands of cores. Most AMG methods and theory
assume a symmetric positive definite operator. This paper presents a new variation on classical AMG for
nonsymmetric matrices (denoted $\ell$AIR), based on a local approximation to the ideal restriction operator, coupled with F-relaxation.
A new block decomposition of the AMG error-propagation operator is used for a spectral analysis of convergence, and the efficacy
of the algorithm is demonstrated on systems arising from the discrete form of the advection-diffusion-reaction equation. $\ell$AIR is
shown to be a robust solver for various discretizations of the advection-diffusion-reaction equation, including time-dependent and
steady-state, from purely advective to purely diffusive. Convergence is robust for discretizations on unstructured meshes and using
higher-order finite elements, and is particularly effective on upwind discontinuous Galerkin discretizations. Although the implementation used
here is not parallel, each part of the algorithm is highly parallelizable, avoiding common multigrid adjustments for strong advection such as
line-relaxation and K- or W-cycles that can be effective in serial, but suffer from high communication costs in parallel, limiting their scalability.
\end{abstract}

\section{Introduction and motivation}\label{sec:intro}

Algebraic multigrid (AMG) is an iterative solver for large sparse linear systems \cite{McCormick:1982cd}. For the
symmetric positive definite (SPD) case, often resulting from the discretization of elliptic partial differential equations (PDEs),
convergence of AMG is well-motivated \cite{ Falgout:2005hm,Falgout:2004cs,MacLachlan:2014di,McCormick:1982cd,Notay:2014uc,
Vassilevski:2008wd} and AMG is among the fastest numerical solvers available. Furthermore, AMG scales in parallel to hundreds of
thousands of cores \cite{Baker:2012ko}, making it a key component of many high-performance simulation codes. 

A variety of AMG methods have been proposed to generalize the AMG framework to nonsymmetric matrices. Perhaps
the original idea, and still a common approach, is to treat a nonsymmetric matrix as if it were symmetric, where restriction, $R$, is given by the
transpose of interpolation, $P$: $R := P^T$ \cite{ruge:1987}. In some circumstances, this is an effective choice, but when and why this is effective
is a question that relies largely on experience. Such an approach is the extent of published research on classical AMG pointwise interpolation
formulae \cite{McCormick:1982cd,ruge:1987} for nonsymmetric problems.

In contrast, a number of works suggest that restriction should be built based on $A^T$, or the column-space of $A$, and interpolation should
be based on $A$, or the row-space of $A$ (for example, \cite{Brezina:2010dm,Sala:2008cv}). For classical pointwise interpolation formulae, this
is not immediately applicable because they are based on
an appropriate strength-of-connection measure and splitting of nodes into C-points and F-points (CF-splitting). Using a different pointwise interpolation
formula for $P$ and $R$  based on $A$ and $A^T$ would theoretically require an independent CF-splitting for each, introducing additional
difficulties such as non-square coarse-grid operators. Of course, one could assume a fixed CF-splitting and use formulae for $P$ and $R$
based on $A$ and $A^T$, respectively, but simple tests indicate that this is not effective for nonsymmetric problems.
Aggregation-based AMG is more applicable to using information from $A$ and $A^T$, leading to several variations in classical
smoothed aggregation (SA) for nonsymmetric problems \cite{Brezina:2010dm,Guillard:1996wz,Sala:2008cv}. There have also been recent
solvers developed that use a mix of CF-splitting and aggregation-concepts and are applicable to nonsymmetric problems, typically using some
form of constrained minimization on $P$ and $R$ to approximate the so-called ``ideal'' operators  \cite{Lottes:2017jz,Manteuffel:2017,Olson:2011fg,
Wiesner:2014cy}. Although some results on this front have been encouraging, a robust AMG solver for nonsymmetric linear systems remains
an open problem. 

Part of the reason that nonsymmetric solvers are less robust than their SPD counterparts is that little is known about convergence theory
of AMG in the nonsymmetric setting, thus limiting theoretical motivation to develop new methods. Strong theoretical results on convergence
of iterative methods for non-SPD matrices are difficult to establish and few and far between in the literature. Multigrid traditionally
measures convergence in the matrix-induced energy norm, $\|\mathbf{x}\|_A^2 = \langle A\mathbf{x},\mathbf{x}\rangle$, which is not a valid norm
in the non-SPD setting. A generalization of the energy norm to a $\sqrt{A^*A}$-norm was introduced in \cite{Brezina:2010dm}, and two-grid
convergence for an aggregation-based solver proved under the (strong) assumption of stability of the non-orthogonal coarse-grid correction.
Further analysis of the stability assumption and more practical conditions for two-level and multilevel convergence are ongoing work \cite{nonsymm}. In any
case, the $\sqrt{A^*A}$-norm suggests that $R$ be based on left singular vectors and $P$ based on right singular vectors, but the norm and
corresponding orthogonal coarse-grid correction are intractable to compute. An asymptotic bound on error propagation can be
found by considering the spectral radius of error propagation \cite{Lottes:2017jz,amgir,Notay:2010em}, but asymptotic bounds are not always
indicative of practical performance, and even spectral analyses can be difficult because many tools of linear algebra are not applicable; for
example, the eigenvectors do not necessarily form a basis for the space and so error cannot be expanded in terms of eigenvectors. Recently
it was suggested that the field of values is more indicative of practical performance \cite{notay2018}, a result that needs further
consideration in the practical setting. In \cite{amgir}, multilevel
error-propagation was shown to be nilpotent in the case of block triangular matrices, but such structure makes up only a specific class of
nonsymmetric problems. More generally, \cite{Lottes:2017jz} proposed a relatively self-contained framework for nonsymmetric matrices with
positive real part, $(A+A^T) > 0$, showing two-grid convergence of classical-style AMG (that is, interpolating and restricting C-points by value)
in a spectral sense as well as in an appropriately derived (albeit difficult to compute) norm. The resulting theory was based on approximating
the action of ideal interpolation and ideal restriction on vectors, with accuracy of approximation for a given vector based on the so-called
\textit{form absolute value} \cite{Lottes:2017jz}. However, because the form absolute value is infeasible to compute, the practical solver motivated
in \cite{Lottes:2017jz} reduced to a generalization of constrained energy-minimization in the nonsymmetric setting, similar to
\cite{Wiesner:2014cy,Manteuffel:2017,Olson:2011fg}. 

In terms of solver development, a good model problem to study AMG for nonsymmetric systems is the advection-diffusion-reaction equation,
which comes up in fluid flow and particle transport equations, among others. Let $\kappa$ be the diffusion coefficient, $\beta$ a measure
of the size of advection, and $h$ the mesh spacing. Then $R_h:=\frac{\beta h}{\kappa}$, often called the grid Reynold's number, is a measure of
the numerical balance between advection and diffusion. For $R_h > 1$, the problem is advection-dominated. Under appropriate
boundary conditions, the limit of the weak form as $\kappa \to 0$ is the purely hyperbolic steady-state transport equation, which is well-discretized
by upwinding, resulting in a triangular or block-triangular matrix. Conversely, for $R_h < 1$, the resulting discretization is
diffusion dominated, converging to an operator that numerically looks like a diffusion discretization as $\kappa$ grows (i.e., the advection component
is arbitrarily small and the matrix is effectively SPD). AMG (and many other iterative methods) are designed for elliptic, diffusion-like problems,
and in many cases achieve excellent convergence rates for the diffusion-dominated case. 
Recently, a reduction-based AMG method was developed that is highly effective on the hyperbolic limit \cite{amgir}, but deteriorates when
significant diffusion is introduced. The goal of this work is to bridge this gap and develop an AMG solver that is robust across the spectrum of
diffusivity and, in particular, well-suited for a high-performance parallel implementation. Note that a scalar model problem is chosen here
intentionally to isolate the effects of nonsymmetry. Linear systems with block structure resulting from a system of PDEs can be
difficult for AMG, often requiring individual attention that is outside the scope of this work (for example, \cite{Clees:2005uy}). 

There have been many efforts at developing robust iterative methods specifically for advection-diffusion-type problems. Geometric multigrid (GMG)
methods for problems with advection are often based on the concept of semi-coarsening or line-relaxation in the direction of advection
\cite{Oosterlee:1998ih, Schaffer:1998ui,Yavneh:1998fw,Yavneh:2012fb}; however, such approaches have several drawbacks, including
requiring a priori knowledge of the underlying problem and discretization, as well as limitations with respect to unstructured meshes and
parallelism. Overall, robust convergence was obtained in \cite{Yavneh:2012fb} for a range of advection and diffusion, but the method employed
a line Gauss-Seidel smoother in the $x$- and $y$-direction. \tcr{In the parallel setting, each line relaxation has an $O(\log P)$ communication cost,
for $P$ processors, and also requires knowledge of the problem geometry \cite{luke18}.}
Several efforts have been made to develop AMG algorithms that are robust for advection-dominated problems \cite{Bayramov:2017gm,
Guillard:1996wz,Kim:2003jj,Notay:2012ft,Wu:2006jn}. Generally strong convergence was obtained in \cite{Bayramov:2017gm} and
\cite{Notay:2012ft}; however, the results in each were based on multigrid K- or W-cycles, which come with a substantially higher
communication cost in parallel than a normal V-cycle, thus limiting their parallel efficiency. Analysis has also
been done in the scope of Krylov methods, for example, \cite{Liesen:2005ci,Sonneveld:2009gh}, and other algebraic solvers such as
\cite{Bertaccini:2004cv} and \cite{Krukier:2015ho}. However, a solver that is robust across the spectrum of diffusivity and, in particular,
scalable and parallelizable, has proven difficult to achieve. 

Here, we derive a new way to look at the roles of interpolation and restriction operators in AMG, focusing on the importance of
approximating the so-called ``ideal restriction'' operator.
Ideal restriction was also the motivation behind the $n$AIR algorithm developed in \cite{amgir} for upwind discretizations of hyperbolic PDEs,
as well as a number of works using an approximate block LDU decomposition \cite{Carvalho:2001ib,Mandel:1990kv,Mense:2008gj,Notay:2000vy,
Wiesner:2014cy}. $n$AIR, in particular, proved to be a highly effective solver on strictly advective equations
such as the steady-state transport equation, but the method was specifically developed for matrices with a block-triangular or near block-triangular
structure. In this paper, we generalize the approach of \cite{amgir} to approximating ideal restriction for arbitrary matrices. Numerical
tests on discretizations of advection-diffusion, from strictly advective to strictly diffusive, are then used to examine how classical AMG
interpolation methods fit into the framework of a local approximate ideal restriction ($\ell$AIR). Together, new analytical and numerical results lead to the
development of a solver, $\ell$AIR, that is robust for advection-diffusion-reaction equations discretized on an unstructured mesh, with moving
flow, and any ratio of advection to diffusion. In the limiting case of a diffusion problem, the new solver is competitive with classical AMG
methods; in the hyperbolic limit of no diffusion, the new solver is almost equivalent to that presented in\cite{amgir}. Moreover, each component of $\ell$AIR is
highly parallelizable, and strong convergence does not require sequential algorithms such as Gauss-Seidel line-relaxation or high-communication
acceleration methods such as K- or W-cycles. 

Section \ref{sec:background} formally introduces AMG and its key components, followed by a discussion of the ideal interpolation and
restriction operators and a new block analysis of the AMG error-propagation operator. This new look at error propagation helps explain
the role of restriction in AMG and how it is coupled with interpolation and the splitting of degrees-of-freedom (DOFs) into ``coarse points''
(C-points) and ``fine points'' (F-points). A general approach to building restriction operators in AMG is then introduced in Section \ref{sec:air}.
Numerical results in Section \ref{sec:results} demonstrate that $\ell$AIR is a robust solver for scalar advection-diffusion-type equations, and that
a classical AMG framework (with an appropriate restriction operator) can be extended to nonsymmetric problems. In particular, for
advection-dominated problems, $\ell$AIR outperforms nonsymmetric smoothed aggregation methods, which are generally the most robust AMG solvers for
nonsymmetric problems. An overview of results and future directions are presented in Section \ref{sec:conc}.
$\ell$AIR is implemented in the PyAMG library \cite{Bell:2008} and its parallel implementation in \textit{hypre} \cite{Falgout:2002vu} is ongoing work.

\section{Transfer operators and nonsymmetric algebraic multigrid}\label{sec:background}

\subsection{Algebraic multigrid}\label{sec:background:amg}

Algebraic multigrid is a multilevel iterative solver based on two processes: relaxation and coarse-grid correction. Relaxation
is typically chosen as some simple iterative method such as Jacobi or Gauss-Seidel, and is expected to eliminate error associated
with large singular values of the matrix. For differential operators, such singular values are often geometrically high frequency modes,
on which classical methods such as Jacobi tend to be effective. To complement relaxation, coarse-grid correction consists of a
subspace correction designed to attenuate error associated with small eigenvalues of the matrix. If the so-called \textit{coarse-grid
operator} is too large to explicitly invert, AMG is applied recursively. Together, relaxation and coarse-grid correction are applied
iteratively until a desired residual tolerance is achieved.

Given a linear system $A\mathbf{x} = \mathbf{b}$, the first step in building the solver or ``hierarchy'' is
to partition the ``points'' or degrees-of-freedom (DOFs) into C-points and F-points, where C-points represent DOFs on the
coarse grid. Then, $A$ can be symbolically ordered in block form:
\begin{align}
A & = \begin{pmatrix} A_{ff} & A_{fc} \\ A_{cf} & A_{cc}\end{pmatrix},\label{eq:acf}
\end{align}
where F-points are ordered first, followed by C-points.  Let $A\in\mathbb{R}^{n\times n}$, $n_c$ be the number of C-points, and
$n_f$ the number of F-points. Next, interpolation and restriction operators are defined,
$P:\mathbb{R}^{n_c}\mapsto\mathbb{R}^n$ and $R:\mathbb{R}^{n}\mapsto\mathbb{R}^{n_c}$, respectively, that map between
the current space and the coarse space. Here, we assume that C-points are interpolated and restricted by injection in the classical
AMG sense \cite{McCormick:1982cd,ruge:1987}; that is, $P$ and $R$ take the following block form:
\begin{align}\label{eq:PR}
P & = \begin{pmatrix} W \\ I\end{pmatrix}, \hspace{3ex} R = \begin{pmatrix} Z & I \end{pmatrix},
\end{align}
where the identity block makes up C-point rows of $P$ and C-point columns of $R$. Finally, the coarse-grid operator is defined
as $\mathcal{K} := RAP$, and the two-level coarse-grid correction is given as
\begin{align*}
\mathbf{x}^{(i+1)} = \mathbf{x}^{(i)} + P(RAP)^{-1}R(\mathbf{b} - A\mathbf{x}^{(i)}).
\end{align*}

Let $\overline{\mathbf{x}}$ be the exact solution to $A\mathbf{x} = \mathbf{b}$ and $\mathbf{x}^{(i)}$ some approximation. Two measures
of convergence used in iterative methods are the error, $\mathbf{e}^{(i)} := \overline{\mathbf{x}} - \mathbf{x}^{(i)}$, and residual,
$\mathbf{r}^{(i)} := \mathbf{b}- A\mathbf{x}^{(i)} = A \mathbf{e}^{(i)}$. Although eliminating error is typically the true goal of an iterative method, for
nonsingular $A$, $\mathbf{e} = \mathbf{0}$ if and only if $\mathbf{r} = \mathbf{0}$. Since $\mathbf{r}$ is measurable in practice
and $\mathbf{e}$ is not, it is worth considering both. Error-propagation and residual-propagation of coarse-grid correction take
on the following operator forms:
\begin{align}
\mathbf{e}^{(i+1)} & = \mathcal{E}\mathbf{e}^{(i)} := (I - P(RAP)^{-1}RA)\mathbf{e}^{(i)} \label{eq:err_prop}, \\
\mathbf{r}^{(i+1)} & = \mathcal{R}\mathbf{e}^{(i)} := (I - AP(RAP)^{-1}R)\mathbf{r}^{(i)}. \label{eq:res_prop}
\end{align}
Note that these operators are similar: $\mathcal{E} = A^{-1}\mathcal{R}A$. In the case of a symmetric matrix, $A$, and
Galerkin coarse grid, $R:=P^T$, then $\mathcal{E} = \mathcal{R}^T$. In general, for symmetric $A$, the roles of restriction and
interpolation are more-or-less interchangeable. Suppose $A$ is symmetric and $P_0$ and $R_0$ are effective transfer operators
at reducing the error in the $\ell^2$-norm: $1 \approx \|I - P(RAP)^{-1}RA\| = \|I - AR^T(P^TAR^T)^{-1}P^T\|$. Then,
$R_1:= P_0^T$ and $P_1:=R_0^T$ are effective transfer operators in reducing the residual. A similar result based on
symmetry holds for the spectral radius. Although convergence is not necessarily
measured in the $\ell^2$- or spectral-sense, this is indicative that the same principles can be effective for building $R$ \textit{and} $P$ for
symmetric matrices. For nonsymmetric matrices, the relation between restriction and interpolation is less clear. In particular, for a
nonsymmetric problem, if an AMG solver based on $P_0$ and $R_0$ effectively reduces the error, then an AMG solver based on
$P_1:=R_0^T$ and $R_1:=P_0^T$ will effectively reduce the residual with respect to $A^T$. However, this does \textit{not} indicate
that a solver based on $P_1$ and $R_1$ will be effective when applied to $A$. 

This work focuses on so-called ideal interpolation and ideal restriction operators. When considered in a reduction setting
\cite{Kamowitz:1987eq,MacLachlan:2006gt,amgir,Ries:1983cq},
each of these operators is tightly coupled with an exact solve or effective relaxation on F-points. The idea behind
F-relaxation is to improve the solution at F-points, and then distribute this accuracy to C-points via coarse-grid correction
(ideal interpolation), or get an accurate coarse-grid correction at C-points and distribute this accuracy to F-points via
F-relaxation (ideal restriction). Here we consider some F-relaxation scheme where $\Delta$ is an approximation to
$A_{ff}^{-1}$. Assuming F-points have been chosen such that $A_{ff}$ is
well-conditioned, then F-relaxation should be effective at reducing F-point residuals and/or errors.
Residual propagation and error propagation for F-relaxation are given respectively by
\begin{align}
\mathbf{e}^{(i+1)} & = \begin{pmatrix}I - \Delta A_{ff} & -\Delta A_{fc} \\ 0 & I\end{pmatrix}
	\begin{pmatrix} \mathbf{e}_f^{(i)} \\ \mathbf{e}_c^{(i)} \end{pmatrix}, \label{eq:jacobi_err} \\
\mathbf{r}^{(i+1)} & = \begin{pmatrix}I - A_{ff}\Delta & 0 \\ -A_{cf}\Delta & I\end{pmatrix}
	\begin{pmatrix} \mathbf{r}_f^{(i)} \\ \mathbf{r}_c^{(i)} \end{pmatrix} \label{eq:jacobi_res}. 
\end{align}
For symmetric $A$, residual and error propagation are adjoints of each other. \tcr{For nonsymmetric matrices,
we can note that in relaxing \textit{only} on $A_{ff}$, error propagation and residual propagation
are similar in the F-F block: $I - \Delta A_{ff} = \Delta(I - A_{ff}\Delta)\Delta^{-1}$. 
However, the connection between error and residual reduction
in general is less clear when considering C-points and F-points, as in \eqref{eq:jacobi_err} and \eqref{eq:jacobi_res}.}
Residual reduction is based on the column scaling of $A$ and error reduction on the row-scaling. For numerical results presented
in Section \ref{sec:results}, $\Delta$ corresponds to 1--2 iterations of Jacobi F-relaxation.

\subsection{Ideal interpolation and residual reduction}\label{sec:background:ii}

Suppose that the error after relaxation is in the range of interpolation, that is, $\mathbf{e}^{(i)} = P\mathbf{v}_c$
for some coarse-grid vector $\mathbf{v}_c$. Then coarse-grid correction yields
\begin{align*}
\mathbf{e}^{(i+1)} = \mathbf{e}^{(i)} - P(RAP)^{-1}RA(P\mathbf{v}_c) = \mathbf{e}^{(i)} - P\mathbf{v}_c = \mathbf{0}.
\end{align*}
Obviously, it is advantageous for relaxation to put error in the range of interpolation or, conversely, for interpolation to
accurately represent relaxed error. This is the motivation for the interpolation definition used in classical AMG \cite{McCormick:1982cd,ruge:1987}.
A basic assumption in AMG is that C-points and F-points are chosen such that F-point relaxation can efficiently reduce
the residual at F-points, that is, $A_{ff}$ is well-conditioned, which is also the basis of compatible relaxation \cite{Brannick:2010hz,Livne:2004vt}.
Returning to the error, let $\mathbf{e}_c$ and $\mathbf{e}_f$ be the current error restricted to C-points and F-points,
respectively, and $\mathbf{r} = \mathbf{b} - A\mathbf{x}$ the current residual. Since $A_{ff}$ is assumed to be well-conditioned,
at convergence of F-relaxation we have $\mathbf{r}_f = \mathbf{0}$, which implies 
\begin{align*}
A_{ff} \mathbf{e}_f + A_{fc} \mathbf{e}_c = \mathbf{0} \hspace{2ex}\implies\hspace{2ex}\mathbf{e}_f = -A_{ff}^{-1}A_{fc}\mathbf{e}_c.
\end{align*}
This is the basis for so-called \textit{ideal interpolation,} 
\begin{align}
P_{\textnormal{ideal}} & = \begin{pmatrix} -A_{ff}^{-1}A_{fc} \\ I \end{pmatrix},\label{eq:p_ideal}
\end{align}
which exactly represents the error after exact F-point relaxation. 

Noting that ideal interpolation is based on an assumption of zero residuals at F-points, consider the effect of ideal interpolation
on residual propagation of coarse-grid correction \eqref{eq:res_prop}:
\begin{align*}
\mathbf{r}_f^{(i+1)} & = \mathbf{r}_f^{(i)} - (A_{ff}W+A_{fc})(RAP)^{-1}(Z\mathbf{r}_f^{(i)} + \mathbf{r}_c^{(i)}), \\
\mathbf{r}_c^{(i+1)} & = \mathbf{r}_c^{(i)} - (A_{cf}W + A_{cc})(RAP)^{-1}(Z\mathbf{r}_f^{(i)} + \mathbf{r}_c^{(i)}). 
\end{align*}
For $P = P_{\textnormal{ideal}}$, $W := -A_{ff}^{-1}A_{fc}$, and $RAP = A_{cc}-A_{cf}A_{ff}^{-1}A_{fc}$, independent
of $Z$ \cite{amgir}. Then, \eqref{eq:res_prop} reduces to
\begin{align}
\mathbf{r}^{(i+1)} & = \begin{pmatrix}\mathbf{r}_f^{(i)} \\ -Z\mathbf{r}_f^{(i)}\end{pmatrix}. \label{eq:pideal_res}
\end{align}
That is, ideal interpolation (i) eliminates the contribution of coarse-grid correction to the F-point residual, and (ii) eliminates
the contribution of the previous C-point residual to the updated residual. This is consistent with the notion of preceding coarse-grid
correction based on ideal interpolation with an exact F-point solve: we use F-relaxation to make $\mathbf{r}_f$ small (or zero
for an exact solve) and follow with coarse-grid correction that does not change $\mathbf{r}_f$, but updates $\mathbf{r}_c$ with the
new $\mathbf{r}_f$.

Looking at \eqref{eq:pideal_res} suggests that $Z = \mathbf{0}$ is a good choice for restriction when coupled with $P_{\textnormal{ideal}}$,
as the residual at C-points is then eliminated with coarse-grid correction. In fact, if $Z = \mathbf{0}$, $RAP = A_{cf}W + A_{cc}$ and \eqref{eq:res_prop}
results in an $\ell^2$-orthogonal coarse-grid correction, meaning that $Z = \mathbf{0}$ is optimal in an $\ell^2$-sense. 
Arguably the
fundamental difficulty with nonsymmetric AMG is that coarse-grid correction is typically a non-orthogonal projection. Because non-orthogonal
projections have norm larger than one, this means that coarse-grid correction can actually increase the error or residual, which
can lead to divergent algorithms, particularly if this norm is $h$-dependent. In this sense, $Z = \mathbf{0}$ seems an appealing choice.
However, from a practical perspective, $Z = \mathbf{0}$ is often not the best choice because ideal interpolation (and, thus, an $\ell^2$-orthogonal
coarse-grid correction) is typically not obtained in practice. 

Let $h$ denote the width of a mesh element. The following lemma proves that, if $P \neq P_{\textnormal{ideal}}$ and $Z = \mathbf{0}$, an
increasingly accurate approximation must be made to ideal interpolation to bound $\|\mathcal{R}\|$ as $h\to 0$, which is not scalable. 

\begin{lemma}\label{lem:pbad}
Let $P$ and $R$ take the block form in \eqref{eq:PR}, where $Z = \mathbf{0}$ and $W = -\Delta A_{fc}$ is some approximation 
to $P_{\textnormal{ideal}}$ \eqref{eq:p_ideal}. Define $\widetilde{\Pi} = A\Pi A^{-1}$, corresponding to residual propagation of coarse-grid correction.
Then,
\begin{align*}
\|\widetilde{\Pi}\|^2 = 1 + \sigma_1,
\end{align*}
where $\sigma_1$ is the minimum singular value of $(I - A_{ff}\Delta)A_{fc}\mathcal{K}^{-1}$.
\end{lemma}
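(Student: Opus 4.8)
The plan is to prove the lemma by a direct block computation: insert the prescribed block forms of $P$ and $R$ into the residual-propagation operator, exploit the resulting sparse $2 \times 2$ block structure, and read off the $\ell^2$-operator norm from the associated Gram matrix.

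First I would assemble $\widetilde{\Pi}$ explicitly. Since $\widetilde{\Pi} = A\Pi A^{-1} = I - AP(RAP)^{-1}R$, with $R = \begin{pmatrix}\mathbf{0} & I\end{pmatrix}$ and $P = \begin{pmatrix} W \\ I\end{pmatrix}$ one gets $AP = \begin{pmatrix} A_{ff}W + A_{fc} \\ A_{cf}W + A_{cc}\end{pmatrix}$, hence $\mathcal{K} := RAP = A_{cf}W + A_{cc}$ — exactly the coarse operator named in the statement, invertible by the standing two-grid assumption. Substituting $W = -\Delta A_{fc}$ turns the F-block of $AP$ into $A_{ff}W + A_{fc} = (I - A_{ff}\Delta)A_{fc}$, while the F-columns of $R$ vanish, so $AP\,\mathcal{K}^{-1}R$ has zero first block-column and second block-column $\begin{pmatrix}(I - A_{ff}\Delta)A_{fc}\mathcal{K}^{-1}\\ I\end{pmatrix}$. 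Subtracting from the identity,
\begin{align*}
\widetilde{\Pi} = \begin{pmatrix} I & -M \\ \mathbf{0} & \mathbf{0}\end{pmatrix}, \qquad M := (I - A_{ff}\Delta)A_{fc}\mathcal{K}^{-1}.
\end{align*}
One can sanity-check that $\widetilde{\Pi}^2 = \widetilde{\Pi}$, as residual propagation of a coarse-grid correction must be idempotent.

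The norm then follows from the vanishing C-rows: $\widetilde{\Pi}\widetilde{\Pi}^T = \begin{pmatrix} I + MM^T & \mathbf{0} \\ \mathbf{0} & \mathbf{0}\end{pmatrix}$, so $\|\widetilde{\Pi}\|^2 = \lambda_{\max}(\widetilde{\Pi}\widetilde{\Pi}^T) = 1 + \lambda_{\max}(MM^T)$, which is the asserted $1 + \sigma_1$ once $\sigma_1$ is identified with the top eigenvalue of $MM^T$ (equivalently, the square of the spectral norm of $M = (I - A_{ff}\Delta)A_{fc}\mathcal{K}^{-1}$); more generally the singular values of $\widetilde{\Pi}$ are $\{\sqrt{1 + \sigma_i(M)^2}\}$ padded with zeros.

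I do not expect a serious obstacle here — the whole content is in recognizing the block structure — and the main thing to handle carefully is that the collapsed form of $\widetilde{\Pi}$ is special to the prescribed choices: $Z = \mathbf{0}$ is what annihilates the F-columns of $AP\mathcal{K}^{-1}R$, and $W = -\Delta A_{fc}$ is what produces the factor $I - A_{ff}\Delta$, together with the invertibility of $\mathcal{K}$. The takeaway worth emphasizing, which is the purpose of the lemma, is that $\|\widetilde{\Pi}\| \ge 1$ unconditionally and stays bounded as $h \to 0$ only if $\Delta$ approximates $A_{ff}^{-1}$ on $\mathrm{range}(A_{fc}\mathcal{K}^{-1})$ with mesh-independent accuracy, so a fixed-stencil $W$ cannot in general yield a scalable residual-propagation norm.
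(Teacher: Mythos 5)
Your computation is correct, and it reaches the stated result by a genuinely different and more elementary route than the paper. You form the Gram matrix $\widetilde{\Pi}\widetilde{\Pi}^{T}$ directly and read off $\|\widetilde{\Pi}\|^{2} = 1 + \lambda_{\max}(MM^{T})$, with $M = (I-A_{ff}\Delta)A_{fc}\mathcal{K}^{-1}$; this is self-contained and needs nothing beyond the block structure. The paper instead invokes the minimal-canonical-angle identity $\|\widetilde{\Pi}\|^{2} = \csc^{2}(\theta_{\min})$ for oblique projections (citing Szyld and Deutsch), computes the supremum defining $\cos(\theta_{\min})$ over $\mathcal{R}(\widetilde{\Pi}^{*})$ and $\ker(\widetilde{\Pi}^{*})$, and converts back. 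Both land in the same place; the paper's route makes the geometric content visible --- the excess over $1$ is $\cot^{2}(\theta_{\min})$, the penalty for a non-orthogonal coarse-grid correction --- while yours is shorter and avoids external results. Two remarks worth noting. First, you take $\widetilde{\Pi} = I - AP\mathcal{K}^{-1}R$ (the residual-propagation operator itself), whereas $\widetilde{\Pi} = A\Pi A^{-1}$ with $\Pi = P(RAP)^{-1}RA$ is literally $AP\mathcal{K}^{-1}R$, which is what the paper's proof expands; since these two are complementary non-trivial idempotents, their $\ell^{2}$ norms coincide, so the discrepancy is harmless, but you should make the identification explicit rather than implicitly relying on it. Second, the lemma's phrase ``minimum singular value'' is an error in the statement: the paper's own proof says ``largest,'' and the quantity that actually appears in the final expression is $\lambda_{\max}(M^{*}M)$, i.e., the \emph{square} of the largest singular value of $M$. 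You resolved this correctly by identifying $\sigma_{1}$ with the top eigenvalue of $MM^{T}$.
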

\begin{proof}
Define the minimal canonical angle, $\theta_{min}$, between subspaces $\mathcal{X},\mathcal{Y}$, as
\begin{align}
\cos\left(\theta_{min}^{[\mathcal{X},\mathcal{Y}]}\right) & = \sup_{\substack{\mathbf{x}\in\mathcal{X},\\ \mathbf{y}\in\mathcal{Y} } }
	\frac{|\langle \mathbf{x},\mathbf{y}\rangle|}{\|\mathbf{x}\|\|\mathbf{y}\|},\label{eq:theta1}
\end{align}
where $\theta_{min}^{[\mathcal{X},\mathcal{Y}]} = \theta_{min}^{[\mathcal{X}^{\perp},\mathcal{Y}^{\perp}]}$ \cite{Deutsch:1995tc,Szyld:2006bg}.
Then, note that 
\begin{align}
\|\widetilde{\Pi}\|^2 = \|\widetilde{\Pi}^*\|^2 = \csc^2\left(\theta_{min}^{[\mathcal{R}(\widetilde{\Pi}^*),\textnormal{ker}(\widetilde{\Pi}^*)]}\right)  =
	1 + \cot^2\left(\theta_{min}^{[\mathcal{R}(\widetilde{\Pi}^*),\textnormal{ker}(\widetilde{\Pi}^*)]}\right),\label{eq:cot}
\end{align}
where $\cot^2\left(\theta_{min}^{[\mathcal{R}(\widetilde{\Pi}^*),\textnormal{ker}(\widetilde{\Pi}^*)]}\right)$ accounts
for the non-orthogonal part of $\widetilde{\Pi}$ \cite{Szyld:2006bg}. Expanding residual propagation of coarse-grid correction yields
\begin{align*}
\widetilde{\Pi} & = A\Pi A^{-1} = AP(RAP)^{-1}R = \begin{pmatrix} \mathbf{0} & (I - A_{ff}\Delta)A_{fc}\mathcal{K}^{-1} \\ \mathbf{0} & I\end{pmatrix}.
\end{align*}
Note that $\mathcal{R}(\widetilde{\Pi}^*) = (\mathbf{0},\mathbf{v}_c^T)^T$, for all $\mathbf{v}_c$ and ker$(\widetilde{\Pi}^*)$
is given by block vectors with the form $\mathbf{x}_c = -\mathcal{K}^{-*}A_{fc}^*(I - A_{ff} \Delta)^* \mathbf{x}_f$. Plugging in
to \eqref{eq:theta1} yields
\begin{align*}
\cos\left(\theta_{min}^{[\mathcal{R}(\widetilde{\Pi}^*),\textnormal{ker}(\widetilde{\Pi}^*)]}\right) & = 
	\sup_{\substack{\mathbf{v}_c\neq\mathbf{0},\\ \mathbf{x}_f \neq\mathbf{0}} }
	\frac{\Big| \Big\langle \mathbf{v}_c, \mathcal{K}^{-*}A_{fc}^*(I - A_{ff}\Delta)^*\mathbf{x}_f\Big\rangle\Big| }{\|\mathbf{v}_c\|\sqrt{\|\mathbf{x}_f\|^2 +
	\|\mathcal{K}^{-*}A_{fc}^*(I - A_{ff}\Delta)^*\mathbf{x}_f\|^2}} \\
& = \sup_{\mathbf{x}_f \neq\mathbf{0}} \frac{\|\mathcal{K}^{-*}A_{fc}^*(I - A_{ff}\Delta)^*\mathbf{x}_f\| }{\sqrt{\|\mathbf{x}_f\|^2 +
	\|\mathcal{K}^{-*}A_{fc}^*(I - A_{ff}\Delta)^*\mathbf{x}_f\|^2}}, \\
\cos^2\left(\theta_{min}^{[\mathcal{R}(\Pi),\textnormal{ker}(\Pi)]}\right) & = \sup_{\mathbf{x}_f \neq\mathbf{0}}
	\frac{\|\mathcal{K}^{-*}A_{fc}^*(I - A_{ff}\Delta)^*\mathbf{x}_f\|^2 }{\|\mathbf{x}_f\|^2 +
	\|\mathcal{K}^{-*}A_{fc}^*(I - A_{ff}\Delta)^*\mathbf{x}_f\|^2} \\
& = \sup_k \frac{\sigma_k}{1 + \sigma_k},
\end{align*}
where $\{\sigma_k\}_{k=1}^{n_f}$ are the singular values of $\mathcal{K}^{-*}A_{fc}^*(I - A_{ff}\Delta)^*$ and, equivalently,
$(I - A_{ff}\Delta)A_{fc}\mathcal{K}^{-1}$, in decreasing order. Then, $\theta_{min}^{[\mathcal{R}(\widetilde{\Pi}^*),
\textnormal{ker}(\widetilde{\Pi}^*)]} = \arccos\bigg(\sqrt{\frac{\sigma_1}{1+\sigma_1}}\bigg)$,
where $\sigma_1$ is the largest singular value of $(I - A_{ff}\Delta)A_{fc}\mathcal{K}^{-1}$.
Plugging in, $\cot\Big(\theta_{min}^{[\mathcal{R}(\widetilde{\Pi}^*),\textnormal{ker}(\widetilde{\Pi}^*)]}\Big) = \sqrt{\sigma_1}$,
and $\|\Pi\|^2 = 1 + \sigma_1$.
\end{proof}

Because the largest singular value of $\mathcal{K}^{-1}$ is expected to scale like $\frac{1}{h}$ for advection and $\frac{1}{h^2}$ for
diffusion, this indicates that for $Z = \mathbf{0}$, an increasingly accurate approximation of $R_{\textnormal{ideal}}$ must be
made to guarantee that the coarse-grid correction $\widetilde{\Pi}$, remains bounded as $h\to0$.

A theoretical understanding of ideal interpolation in two-grid convergence for SPD matrices and exactly how it is ideal can be found in
\cite{Falgout:2004cs}, along with a more recent generalization to a class of ideal interpolation operators in \cite{ideal}.
In general, ideal interpolation may result in a dense $W$, and is 
impractical to form. However, the goal in much of AMG literature (aside from choosing a good C/F splitting) is to build a sparse
approximation to $P_{\textnormal{ideal}}$. For F-relaxation to converge quickly, $A_{ff}$ needs to be well-conditioned, in which case a sparse
approximation to $A_{ff}^{-1}$ is possible \cite{Brannick:2007fb}.

\subsection{Ideal restriction and error reduction}\label{sec:background:ir}

The focus in AMG (and most multigrid methods) has long been the accuracy of interpolation. The above seems to suggest
that the choice of restriction is less important than interpolation. For symmetric matrices, there are good reasons to take $R = P^T$:
the coarse-grid matrix remains symmetric and, moreover, coarse-grid correction is then an orthogonal projection in the $A$-norm,
which is typically what AMG convergence is measured in. In this setting, interpolation really is the determining factor in AMG convergence.
For nonsymmetric problems, $A$ no longer defines a valid norm, and the loss of orthogonality in coarse-grid correction is
difficult to avoid. In this case, the choice $R = P^T$ is somewhat arbitrary, although it can be effective for some problems
\cite{ruge:1987}. However, other choices are possible; for
example, see \cite{Lottes:2017jz,Manteuffel:2017,amgir,Olson:2011fg,Sala:2008cv,Wiesner:2014cy}. Here, we consider the fact that,
as with interpolation, there is, in a sense, an ``ideal'' restriction operator. Recall that ideal interpolation was based on the residual;
\textit{ideal restriction} is instead based on the error. 

One role of restriction can be seen as attenuating the effect of error components not in the range of interpolation.
Let $P$ be interpolation, as in \eqref{eq:PR}. Then, any error vector, $\mathbf{e}$, can be decomposed into two components: one that is
interpolated from the error at C-points, and the remainder:
\begin{align*}
\mathbf{e} = \begin{pmatrix} \mathbf{e}_f \\ \mathbf{e}_c \end{pmatrix} =
	 \begin{pmatrix} W\mathbf{e}_c \\ \mathbf{e}_c \end{pmatrix} + \begin{pmatrix} \delta\mathbf{e}_f \\ \mathbf{0} \end{pmatrix},
\end{align*}
where $\delta\mathbf{e}_f$ is the ``contamination'' of F-point error, which is not in the range of interpolation.
If $\delta\mathbf{e}_f = \mathbf{0}$, then the F-point error is in the range of interpolation, and coarse-grid correction is
exact (see Section \ref{sec:background:ii}).
Here, we focus on the effect of restriction on error-propagation of coarse-grid correction \eqref{eq:err_prop}:
\begin{align}
\mathbf{e}^{(i+1)} & = \begin{pmatrix}\mathbf{e}_f^{(i)} \\\mathbf{e}_c^{(i)} \end{pmatrix} - P(RAP)^{-1}RA\left[
	\begin{pmatrix}W\mathbf{e}_c^{(i)} \\ \mathbf{e}_c^{(i)} \end{pmatrix} + \begin{pmatrix}\delta\mathbf{e}_f^{(i)} 
	\\\mathbf{0}\end{pmatrix}\right] \nonumber\\
& = \begin{pmatrix}\mathbf{e}_f^{(i)} \\\mathbf{e}_c^{(i)} \end{pmatrix} - P(RAP)^{-1}RA\left[ P\mathbf{e}_c^{(i)} + 
	\begin{pmatrix}\delta\mathbf{e}_f^{(i)} \\\mathbf{0}\end{pmatrix}\right] \nonumber\\
& = \begin{pmatrix}\mathbf{e}_f^{(i)} - W\mathbf{e}_c^{(i)} \\\mathbf{0}\end{pmatrix} - P(RAP)^{-1}RA\begin{pmatrix}\delta\mathbf{e}_f^{(i)}\\
	\mathbf{0}\end{pmatrix} \label{eq:cpt} \\
& = \left[I - P(RAP)^{-1}RA\right]\begin{pmatrix}\delta\mathbf{e}_f^{(i)}\\ \mathbf{0}\end{pmatrix} . \nonumber
\end{align}
\tcr{Looking at \eqref{eq:cpt}, it would be desirable to choose operators such that the second term is zero for all $\delta\mathbf{e}_f$. In
that case, the solution after coarse-grid correction would have a correction at F-points defined by $W$,
$\mathbf{e}_f^{(i+1)} = \mathbf{e}_f^{(i)} - W\mathbf{e}_c^{(i)}$, and the solution at C-points would be exact. Coupling this
with a convergent F-relaxation scheme gives a convergent two-grid method. Setting the latter term in \eqref{eq:cpt}
equal to zero for all $\delta\mathbf{e}_f$ can also be seen as eliminating the contribution of $\delta\mathbf{e}_f$ to the coarse-grid
right-hand side, which is equivalent to setting 
$RA\begin{pmatrix}\delta\mathbf{e}_f\\\mathbf{0}\end{pmatrix} = \mathbf{0}$ for all $\delta\mathbf{e}_f$.} Expanding $RA$, we have
\begin{align}
\mathbf{0} = \begin{pmatrix} Z & I \end{pmatrix}\begin{pmatrix} A_{ff} & A_{fc} \\ A_{cf} & A_{cc}\end{pmatrix} \begin{pmatrix}\delta\mathbf{e}_f\\\mathbf{0}\end{pmatrix} & = (ZA_{ff} + A_{cf})\delta\mathbf{e}_f, \label{eq:def0}
\end{align}
which is satisfied by $Z= -A_{cf}A_{ff}^{-1}$. This leads to the \textit{ideal restriction} operator,
\begin{align}
R_{\textnormal{ideal}} & = \begin{pmatrix} -A_{cf}A_{ff}^{-1} & I \end{pmatrix}.\label{eq:r_ideal}
\end{align}
In fact, $R_{\textnormal{ideal}}$ is the
unique operator that gives an exact correction at C-points, independent of the interpolation operator \cite{amgir}.
Similar to the case of ideal interpolation, letting $W := \mathbf{0}$ gives an $\ell^2$-orthogonal coarse-grid
correction. \tcr{In practice, however, $R_{\textnormal{ideal}}$ cannot be obtained exactly, and an analagous result as Lemma
\ref{lem:pbad} indicates that $W$ must be chosen to compensate for this imprecision for a scalable algorithm.
Nevertheless, in the context of error reduction},
the accurate correction at C-points obtained with $R\approx R_{\textnormal{ideal}}$ is generally
best followed by F-relaxation to distribute the new accuracy at C-points to F-points.

\subsection{Block-Analysis of AMG}\label{sec:background:block}

Let $\Delta$ be some approximation to $A_{ff}^{-1}$ defining our F-relaxation scheme, and $W$ and $Z$ some
interpolation and restriction operators over F-points, respectively. Denote $\mathcal{K} := RAP$. Then error propagation
of a two-level scheme with post F-relaxation takes the form
\begin{align*}
\mathcal{E} & = \underbrace{ \begin{pmatrix} I - \Delta A_{ff} & -\Delta A_{fc} \\ 0 & I \end{pmatrix}}_{F-relaxation}
\underbrace{\begin{pmatrix} I - W\mathcal{K}^{-1}(ZA_{ff}+A_{cf}) & -W\mathcal{K}^{-1}(ZA_{fc}+A_{cc}) \\ 
	-\mathcal{K}^{-1}(ZA_{ff}+A_{cf}) & I - \mathcal{K}^{-1}(ZA_{fc}+A_{cc}) \end{pmatrix}}_{Coarse-grid\text{ }correction}\\
& = \begin{pmatrix} I - \Delta A_{ff} - \widehat{W}\mathcal{K}^{-1}(ZA_{ff}+A_{cf}) &
	-\Delta A_{ff} - \widehat{W}\mathcal{K}^{-1}(ZA_{fc}+A_{cc}) \\ 
	-\mathcal{K}^{-1}(ZA_{ff}+A_{cf}) & I - \mathcal{K}^{-1}(ZA_{fc}+A_{cc}) \end{pmatrix},
\end{align*}
where $\widehat{W} := (I - \Delta A_{ff})W - \Delta A_{fc}$ and $\mathcal{K} := RAP = ZA_{ff}W + ZA_{fc} + A_{cf}W + A_{cc}$.
We refer to $\widehat{W}$ as the \textit{effective interpolation}, because a little bit of algebra shows that we can expand
error-propagation to take the form of an approximate LDU preconditioner for $A$:
\begin{align*}
\mathcal{E} & = I - M^{-1}A \\
& = \begin{pmatrix} I & 0 \\ 0 & I \end{pmatrix} - \begin{pmatrix}I & \widehat{W} \\ 0 & I\end{pmatrix} 
	\begin{pmatrix} \Delta & 0 \\0 & \mathcal{K}^{-1}\end{pmatrix} \begin{pmatrix} I & 0 \\ Z & I\end{pmatrix}A.
\end{align*}
Here, the LDU preconditioner, $M$, can be collapsed:
\begin{align*}
M & = \begin{pmatrix} I & 0 \\ -Z & I\end{pmatrix} \begin{pmatrix} \Delta^{-1} & 0 \\0 & \mathcal{K}\end{pmatrix}
	\begin{pmatrix}I & -\widehat{W} \\ 0 & I\end{pmatrix} \\
& = \begin{pmatrix} \Delta^{-1} & A_{fc} + (A_{ff} - \Delta^{-1})W \\ -Z\Delta^{-1} & (Z\Delta^{-1}+A_{cf})W + A_{cc}\end{pmatrix}.
\end{align*}
In looking at convergence of preconditioners, it is useful to let $A = M+N$ and notice that $I - M^{-1}A = -M^{-1}N$. 
Here, $N$ reduces to the following outer product:
\begin{align*}
N & = \begin{pmatrix} A_{ff} - \Delta^{-1} & -(A_{ff} - \Delta^{-1})W \\ Z\Delta^{-1} + A_{cf} & -(A_{cf}+Z\Delta^{-1})W \end{pmatrix} \\
& = \begin{pmatrix} A_{ff} - \Delta^{-1} \\ Z\Delta^{-1} + A_{cf} \end{pmatrix} \begin{pmatrix} I & -W \end{pmatrix}.
\end{align*}
Then,
\begin{align*}
(I - M^{-1}A) & = -\begin{pmatrix}I & \widehat{W} \\ 0 & I\end{pmatrix}  \begin{pmatrix} \Delta & 0 \\0 & \mathcal{K}^{-1}\end{pmatrix}\
	\begin{pmatrix} I & 0 \\ Z & I\end{pmatrix} \begin{pmatrix} A_{ff} - \Delta^{-1} \\ Z\Delta^{-1} + A_{cf} \end{pmatrix}
	\begin{pmatrix} I & -W \end{pmatrix} \\
& = \begin{pmatrix} I - \Delta A_{ff} - \widehat{W}\mathcal{K}^{-1}(ZA_{ff} + A_{cf}) \\ -\mathcal{K}^{-1}(ZA_{ff} + A_{cf}) \end{pmatrix}
	\begin{pmatrix} I & -W \end{pmatrix} .
\end{align*}
Denoting $H := I - \Delta A_{ff} - \widehat{W}\mathcal{K}^{-1}(ZA_{ff} + A_{cf})$ and $J := -\mathcal{K}^{-1}(ZA_{ff} + A_{cf})$, then
the null space and range of $I - M^{-1}A$ are given respectively by
\begin{align*}
\mathcal{N}(I - M^{-1}A) &= \begin{pmatrix}W \\ I\end{pmatrix}\overline{\zeta} \hspace{3ex} \forall\text{ }\overline{\zeta}\in\mathbb{R}^{n_c}, \\
\mathcal{R}(I - M^{-1}A) &=  \begin{pmatrix}H \\ J\end{pmatrix}\overline{\eta}  \hspace{3ex} \forall\text{ }\overline{\eta}\in\mathbb{R}^{n_f}.
\end{align*}
Note that the null space is just the range of interpolation because we have assumed an exact coarse-grid solve. Eigenvectors of
$I - M^{-1}A$ with nonzero eigenvalues must then take the form $\begin{pmatrix}H \\ J\end{pmatrix}\eta$ for some
$\eta \in \mathbb{R}^{n_f}$. Then the eigenvalue problem $(I - M^{-1}A)\mathbf{v} = \lambda\mathbf{v}$ takes the form
\begin{align*}
\begin{pmatrix}H \\ J\end{pmatrix}\begin{pmatrix} I & -W \end{pmatrix} \mathbf{v} & =  \lambda\mathbf{v}, \\
\begin{pmatrix}H \\ J\end{pmatrix}\begin{pmatrix} I & -W \end{pmatrix} \begin{pmatrix}H \\ J\end{pmatrix}\eta & =  \lambda\begin{pmatrix}H \\ J\end{pmatrix}\eta, \\
\begin{pmatrix}H \\ J\end{pmatrix} (H - WJ)\eta & = \begin{pmatrix}H \\ J\end{pmatrix} \lambda\eta.
\end{align*}
Thus, eigenvalues of $I - M^{-1}A$ corresponding to the range are given by eigenvalues of
\begin{align}
G :&= H - WJ \nonumber\\
& = I - \Delta A_{ff} - \widehat{W}\mathcal{K}^{-1}(ZA_{ff} + A_{cf}) + W\mathcal{K}^{-1}(ZA_{ff} + A_{cf}) \nonumber\\
& = I - \Delta A_{ff} - (\widehat{W} - W)\mathcal{K}^{-1}(ZA_{ff} + A_{cf}) \nonumber\\
& = (I - \Delta A_{ff}) + \Delta(A_{ff}W + A_{fc})\mathcal{K}^{-1}(ZA_{ff} + A_{cf}) \label{eq:2grid_spec}.
\end{align}
Terms $(A_{ff}W+A_{fc})$ and $(ZA_{ff}+A_{cf})$ represent how close to ideal our given interpolation and restriction operators
are. If $E_{TG}$ is the two-grid error-propagation operator, then if $W$ \textit{or} $Z$ are ideal, $\rho(E_{TG}) = 1 - \Delta A_{ff}$. 
On the other hand, for an exact solve on F-points ($\Delta := A_{ff}^{-1}$) and general $W$ and $Z$,
\begin{align}
\rho(E_{TG}) & = \rho\Big( (W + A_{ff}^{-1}A_{fc})\mathcal{K}^{-1}(ZA_{ff} + A_{cf})\Big). \label{eq:cgc_rho}
\end{align}
An exact solve on F-points coupled with ideal interpolation \textit{or} ideal restriction results in $\rho(E_{TG})=0$.

Looking at \eqref{eq:2grid_spec}, the first term corresponds to F-relaxation and the second to coarse-grid correction. For
SPD matrices, it is natural to make the coarse-grid correction term small when relaxation error is large, and vice versa. 
A similar result holds in the nonsymmetric setting. In \cite{amgir}, an expanded analysis develops sufficient conditions on $R$, $P$,
and $\Delta$ to bound $\|G\| < 1$. In particular, $P$ must satisfy a classical multigrid approximation property, with the strength of
the approximation property (in terms of how accurately low-energy modes are interpolated) governed by how accurately $R$
approximates $R_{\textnormal{ideal}}$ and $\Delta$ approximates $A_{ff}^{-1}$. Less accurate approximations to ideal restriction and $A_{ff}^{-1}$
require that $P$ satisfy a stronger approximation property, while more accurate approximations make the requirement on $P$ weaker. 

In the case of block-triangular matrices resulting from the discretization of advection, sparse and highly accurate approximations to
$A_{ff}^{-1}$ and $R_{\textnormal{ideal}}$ can often be formed \cite{amgir}. Here, a local approximation to the ideal restriction operator ($\ell$AIR)
is developed, which provides an accurate approximation to the action of $R_{\textnormal{ideal}}$, even in the case of diffusive-like matrices. However,
for diffusion-dominated problems, we cannot rely on as accurate of approximations to $R_{\textnormal{ideal}}$ as in the purely advective case. 
Fortunately, building interpolation operators to capture low-energy modes in the case of diffusive-like problems (and thereby satisfy
approximation properties) is a well-studied topic. Building on the analysis here and in \cite{amgir}, classical multigrid interpolation
formulae are coupled with the newly developed $\ell$AIR algorithm for a robust nonsymmetric solver. 

\section{Local approximate ideal restriction ($\ell$AIR)}\label{sec:air}

\subsection{Local approximate ideal restriction}\label{sec:air:air}

As shown in Section \ref{sec:background:ir}, ideal restriction can be motivated through eliminating the contribution of error at
F-points to the coarse-grid right-hand side. Thus, to build $R$, we try to do this ``locally'' for each C-point (corresponding to a given
row of $R$). For each $i$th C-point, a restriction neighborhood $\mathcal{R}_i$ (consisting of some ``nearby'' F-points) is chosen,
and the idea is to choose restriction weights $z_{ik}$ for each $k\in\mathcal{R}_i$ so that the effect of perturbing error at
any $j\in\mathcal{R}_i$ on the residual at $i$ is zero. Note that a unit change in error at point $j$ changes
the residual at point $i$ by $a_{ij}$ and the residual at each point $k$ in $\mathcal{R}_i$ by $a_{kj}$. Requiring
that restriction weights be defined so that the total effect of these changes on the residual at point $i$ is zero
gives the following equation:
\begin{align}\label{eq:johnair}
a_{ij} + \sum_{k\in\mathcal{R}_i} z_{ik}a_{kj} = 0.
\end{align}
Solving \eqref{eq:johnair} for all $j\in\mathcal{R}_i$ determines the $i$th row of $Z$, where
$R = (Z, I)$, and is equivalent to setting $(RA)_{ik} = 0$ for all $k$ such that
$(i,k)\in\mathcal{R}_i$. This is simply setting $RA$ equal to zero within a pre-determined F-point sparsity pattern for
$R$. Note that this can also be seen as directly approximating the action of $R_{\textnormal{ideal}}$ on F-points, where
$R_{\textnormal{ideal}}A = (\mathbf{0} , S_A)$, for Schur complement $S_A$. In either case, denoting indices of the sparsity
pattern for the $i$th row of $R$ as $\mathcal{R}_i = \{\ell_1,...,\ell_{S_i}\}$, where
$S_i = |\mathcal{R}_i|$ is the size of the sparsity pattern, the resulting linear system takes the form
\begin{align}
\begin{pmatrix} a_{\ell_0 \ell_0} & a_{\ell_1\ell_0} & ... & a_{\ell_{S_i}\ell_0} \\
a_{\ell_0\ell_1} & a_{\ell_1\ell_1} & ... & a_{\ell_{S_i}\ell_1} \\
\vdots & & \ddots & \vdots \\
a_{\ell_0\ell_{S_i}} & a_{\ell_1\ell_{S_i}} & ... & a_{\ell_{S_i}\ell_{S_i}}\end{pmatrix}
\begin{pmatrix} z_{i\ell_0} \\ z_{i \ell_1} \\ \vdots \\ z_{i\ell_{S_i}}\end{pmatrix}
& = - \begin{pmatrix} a_{i\ell_0} \\ a_{i\ell_1} \\ \vdots \\ a_{i\ell_{S_i}}\end{pmatrix} \label{eq:system}.
\end{align}
For matrices with a $k\times k$ block structure, an equivalent system to \eqref{eq:system} can be formed
based on block connections, where $a_{\ell_i,\ell_j}$ is a $k\times k$ block in the matrix $A$ and $z_{i,\ell_j}$
a $k\times k$ block in $R$. \tcr{Such a block structure may arise from a discretized set of $k$ equations with $k$
unknowns, or in certain discontinuous discretizations, such as discontinuous Galerkin, where degrees-of-freedom
for an individual finite element correspond to a non-overlapping block in the matrix.}
Solving \eqref{eq:system} for $k$ right-hand sides determines all elements for
$R$ in block form. If $A_{ff}$ is diagonally dominant, which should be the case given an appropriate CF-splitting,
then \eqref{eq:system} is nonsingular and has a unique solution.\footnote{\tcr{There have been situations on coarse
levels in the hierarchy where a local linear system is singular, and \eqref{eq:system} is formulated as a least-squares
problem to pick the minimal norm solution. Such systems are rare and the cause has been difficult to isolate. For
example, singular or nearly singular local systems may arise from a nearly singular coarse-grid matrix, which is due
to a poor coarsening on the finer level, as opposed to the local choice of neighborhood for $R$. Understanding when
such systems arise and appropriate modifications to handle them, particularly in the nearly singular case, is ongoing work.}}
We refer to the proposed method for building $R$ as \textit{local approximate ideal restriction} ($\ell$AIR).

\tcr{Consistent with many AMG methods, the neighborhood of F-points for a given C-point is chosen as some set of
``strong'' connections, typically of graph distance one or two. One unique aspect of $\ell$AIR is that considering a larger
set of strong connections always leads to a more accurate approximation of $R_{\textnormal{ideal}}$. However,
for many problems, considering larger distance neighborhoods
can lead to intractable complexity due to coarse-grid operator fill in, as well as the $O(\ell^3)$ cost of solving dense linear
systems of size $\ell\times \ell$ for each row. For discretizations with significant diffusion, the number of neighbors of distance
$k$ scales roughly like $O(2dk^d)$, for dimension $d$, leading to a rapid increase in setup cost and operator complexity. 
On coarser levels in the hierarchy with increased matrix connectivity, the local system sizes can reach into the hundreds
for $k = 2$, which can be computationally expensive when solving for every row.
It is worth noting that for upwind discretizations of pure advection, the number of neighbors of distance
$k$ only increases like $O(k)$. Because of this, distance-three or -four restriction neighborhoods may be tractable 
for advective problems, which was considered in \cite{amgir}. However, even on strictly advective problems, the additional
operator complexity of using long-distance neighborhoods for restriction was not justified by improved
convergence \cite{amgir}, and solving the larger dense linear systems here resulting from long-distance neighborhoods
also adds significant setup cost. For these
reasons, we limit numerical results to consider distance-one and -two neighborhoods for restriction.
}

\begin{remark}[Row scaling]

In some cases, the row scaling of a matrix can cause problems for classical AMG interpolation formulae, leading to negative
diagonal entries in the coarse-grid operator. Although such discretizations are not commonplace, it is worth pointing out that
$\ell$AIR is insensitive to row scaling. Suppose that the fine-grid matrix is scaled by some diagonal matrix, $D$: $\tilde{A} := DA$.
Then let $\tilde{R}$ and $\tilde{Z}$ denote the corresponding local approximate ideal restriction operator and its F-block, respectively. 
Weights $\tilde{z}_{ik}$ are given by solving 
\begin{align*}
\tilde{a}_{ij} + \sum_{k\in\mathcal{R}_i} \tilde{z}_{ik}\tilde{a}_{kj} = d_ia_{ij} + \sum_{k\in\mathcal{R}_i} \tilde{z}_{ik}d_ka_{kj} = 0,
\end{align*}
for all $(i,k)\in\mathcal{R}_i$, $i=0,...,n_c-1$. This is satisfied by $\tilde{z}_{ik} := d_iz_{ik}d_k^{-1}$, where $z_{ik}$
are the weights for $A$ satisfying \eqref{eq:johnair} and \eqref{eq:system}. It follows that $\tilde{Z} := D_cZD_f^{-1}$ and
$\tilde{R} := D_cRD^{-1}$. The resulting coarse-grid operator is then defined as 
\begin{align*}
\tilde{R}\tilde{A}P = D_cRD^{-1}DAP = D_cRAP,
\end{align*}
which is simply maintaining the fine-grid row scaling in the coarse-grid operator. In fact, looking at the error-propagation
operator,
\begin{align*}
I - P(\tilde{R}\tilde{A}P)^{-1}\tilde{R}\tilde{A} & = I - P(D_cRAP)^{-1}D_cRA \\
& = I - P(RAP)^{-1}RA,
\end{align*}
we see that error propagation of two-grid coarse-grid correction is independent of row-scaling. The same idea can be applied in a
recursive manner for a full multilevel hierarchy. This is a subtle feature of $\ell$AIR that makes it robust
for a wide class of problems and discretizations.
\end{remark}

\subsection{Comparison with Neumann series}\label{sec:air:neumann}

In \cite{amgir}, ideal restriction was approximated in the context of an AMG algorithm targeting triangular or block-triangular matrices.
For ease of notation, assume that $A$ is lower triangular with unit diagonal. 
Then $A_{ff} = I - L_{ff}$, for strictly lower triangular matrix $L_{ff}$, and $A_{ff}^{-1}$ can be written as a finite Neumann expansion:
\begin{align*}
A_{ff}^{-1} = \sum_{i=0}^{d_f+1} L_{ff}^i,
\end{align*}
where $d_f$ is the diameter of the graph of $A_{ff}$. 
Ideal restriction in \cite{amgir} is then approximated using a $k$th order Neumann approximation for some $k \ll n_f$:
$R = \begin{pmatrix} -A_{cf}\sum_{i=0}^k L_{ff}^i & I \end{pmatrix}.$
\tcr{The error in a truncated Neumann expansion of degree $k$ is given by
\begin{align*}
I - A_{ff}\sum_{i=0}^k L_{ff} = I - \left(\sum_{i=0}^k L_{ff}\right)A_{ff} = L_{ff}^{k+1}.
\end{align*}
In a graph-theoretic sense, $(L_{ff}^{k+1})_{ij}$ is the sum of all weighted walks of
length $k+1$ from node $i$ to node $j$, where a weight is given by the product of the edges in the walk. As discussed in \cite{amgir},
this provides an intuitive understanding of why a Neumann expansion is a good choice for triangular matrices. Consider an upwind
discretization of advection in three dimensions. Then:
\begin{enumerate}
\item Each node $i$ only has neighbors upstream in the direction of the flow.
\item Because information only flows one direction, if there is a path of length $k$ from $i$ to $j$, there is unlikely to be
a path of, say, length $k+2$. That is, as $k$ increases, the number of walks of length $k$ is unlikely to increase significantly
because there are no cycles in the graph. 
\item Because off-diagonal elements of differential operators are typically less than one, as $k$ increases, we expect the weight
of walks to decrease quickly. 
\end{enumerate}
When diffusion is introduced, points (1) and (2) no longer hold. Neighbors can now be found by reaching out in every direction, and
because information is flowing in every direction, the number of walks of length $k$ is increasing roughly exponentially with $k$. This
leads to a relatively dense matrix with large elements, and explains why a truncated Neumann approximation is a poor approximate
inverse for general matrices.}

\tcr{For matrices with triangular structure, the $k$th order Neumann approximation is exact for nodes with a \textit{maximum} distance
$\leq k$ from the diagonal. Equivalently, this eliminates the contribution of these nodes to the coarse-grid right-hand side. Here, we
recognize eliminating the contribution of F-point error to the coarse-grid right-hand side as an important function of the
restriction operator, and generalize the approximation developed in \cite{amgir}. $\ell$AIR as introduced here eliminates the contribution
of \textit{all} F-point error within distance $k$ (not only maximum distance $k$) to the coarse-grid right-hand side.
For some triangular matrices, these two approaches are equivalent, and, for discretizations of advection,
they are typically equivalent or similar. However, in general, the two approaches are quite different.}

To demonstrate the effectiveness of $\ell$AIR over the Neumann approximation,
consider two model problems: (i) the steady-state transport equation considered in \cite{amgir}, and (ii) a streamline upwind Petrov-Galerkin
(SUPG) discretization of an advection-dominated recirculating flow. Steady-state transport is constructed to be triangular in some ordering,
while recirculating flow has small symmetric diffusion components, and advection is not triangular due to the recirculating
velocity field. Table \ref{tab:neumann} gives a relative measure of how well we approximate the ideal operator using distance-one
and -two Neumann and $\ell$AIR for each problem. As expected, for the transport equation, $\ell$AIR and
Neumann perform identically. However, in the recirculating case, Neumann is hardly a more accurate approximation to ideal restriction than
just letting $Z = \mathbf{0}$, while $\ell$AIR retains a similar accuracy of approximation to the triangular case.
\begin{table}[!h]
\centering
\begin{tabular}{| c | c | c c c c |}\Xhline{1.25pt}
 & Problem & Neumann$_1$ & Neumann$_2$ & $\ell$AIR$_1$ & $\ell$AIR$_2$ \\\hline
\multirow{2}{*}{$\frac{\|Z + A_{cf}A_{ff}^{-1}\|_F}{\|A_{cf}A_{ff}^{-1}\|_F}$} &
	 Transport & 0.46 & 0.11 & 0.46 & 0.11 \\
	  & Recirculating & 0.99 & 0.99 & 0.46 & 0.17 \\\Xhline{1.25pt}
\end{tabular} 
\caption{Relative Frobenius error in approximating the F-block of ideal restriction using distance-one
	and -two Neumann and $\ell$AIR approximations, for steady-state transport and a recirculating flow. Each problem has
	approximately 6000 DOFs, and classical AMG coarsening is used to form a CF-splitting. }
\label{tab:neumann}
\end{table}

Although $\ell$AIR is a generalization of the Neumann approximation, the latter is an important contribution conceptually,
as it provides insight into how $n$AIR is able to achieve strong convergence factors on difficult problems and
high-order finite elements \cite{amgir}. In the triangular setting, it is relatively well understood how
good convergence is obtained with $n$AIR \cite{amgir}. Notwithstanding the block analysis in Section \ref{sec:background:block}, a good
understanding of when and why $\ell$AIR performs well, in general, is a focus of current research. 

\subsection{Filtering and lumping}\label{sec:air:lump}

A simple but effective technique for complexity reduction used in \cite{amgir} involves eliminating relatively small entries
from each row in the matrix, on every level in the hierarchy. For the steady-state transport equation considered there, it was found that
entries could be eliminated rather greedily without causing a significant degradation in convergence; for example, for all test problems,
entries were eliminated in row $i$ that were smaller than $0.001\cdot\max_j |a_{ij}|$. It is known that such an approach is typically not
effective on diffusive matrices, prompting research into more advanced techniques for reducing the number of matrix nonzeros \cite{Bienz:2015ve,
Falgout:2014uz,Treister:2015cp}. Here, we use a technique similar to the elimination used in \cite{amgir}, but instead of actually
eliminating entries, we add them to the diagonal in order to preserve the row sum. The concept of collapsing entries to the
diagonal is one of the original ideas of AMG \cite{Brandt:1985um,ruge:1987}, and proves to be a more robust technique than simple elimination
when diffusion is introduced. 

As an example, elimination and lumping are used on the discontinuous Galerkin (DG) discretization of advection-diffusion-reaction considered
in Section \ref{sec:results:dgu}. Table \ref{tab:filter} shows the average convergence factor (CF) and so-called work-unit-per-digit-of-accuracy,
denoted WPD, for various complexity reduction strategies. The WPD is introduced in detail with numerical results in Section
\ref{sec:results}; for now, consider it as a linear measure of time to solution, that is, dropping from 20 WPD to 10 is a $2\times$ reduction in
solve time. 
{
\begin{table}[!h]
\renewcommand{\tabcolsep}{0.195cm}
\small
\centering
\begin{tabular}{| c | c c | c c | c c | c c | c c | c c | c c |}\Xhline{1.25pt}
 & \multicolumn{2}{c|}{None} & \multicolumn{6}{c|}{Lumping} & \multicolumn{6}{c|}{Elimination} \\\hline
$\kappa$/$\theta_D$ & \multicolumn{2}{c|}{0} & \multicolumn{2}{c|}{0.001} & \multicolumn{2}{c|}{0.01} & \multicolumn{2}{c|}{0.1} &
	\multicolumn{2}{c|}{0.001} & \multicolumn{2}{c|}{0.01} & \multicolumn{2}{c|}{0.1} \\
& WPD & CF & WPD & CF & WPD & CF & WPD & CF & WPD & CF & WPD & CF & WPD & CF \\\hline
$10^{-10}$ & 12.2 & 0.37 & 9.5 & 0.38 & 9.1 & 0.38 & 11.3 & 0.49 & 9.6 & 0.38 & 9.1 & 0.38 & 12.7 & 0.53 \\
$10^{-7}$ & 12.1 & 0.37 &9.3 & 0.37 & 8.5 & 0.36 & 11.8 & 0.51 & 9.7 & 0.38 & 8.8 & 0.37 & 13.2 & 0.55 \\
$10^{-4}$ & 20.1 & 0.51 & 17.7 & 0.51 & 18.0 & 0.56 & 114.7 & 0.93 & 17.9 & 0.52 & 27.3 & 0.68 & 267.9 & 0.97 \\
$10^{-1}$ & 38.0 & 0.56 & 32.8 & 0.57 & 42.1 & 0.71 &  \multicolumn{2}{c|}{DNC} & 94.6 & 0.82 & 223.7 & 0.94 &  \multicolumn{2}{c|}{DNC} \\
$10$ & 45.0 & 0.61 & 38.4 & 0.62 & 48.9 & 0.74 &  \multicolumn{2}{c|}{DNC} & 105.7 & 0.84 & 179.6 & 0.92 & \multicolumn{2}{c|}{DNC}\\\Xhline{1.25pt}
\end{tabular} 
\caption{Results for various complexity-reduction techniques applied to a DG discretization of advection-diffusion-reaction with diffusion
coefficient $\kappa$ and elimination/lumping tolerance $\theta_D$. For all cases
tested, lumping entries in row $i$ smaller than $0.001\cdot\max_j |a_{ij}|$ to the diagonal results in convergence factors approximately
equal to those achieved without lumping/elimination, while decreasing the WPD by $10-25\%$. DNC denotes that the iterations did not converge.}
\label{tab:filter}
\end{table}
}

A lumping tolerance of $0.001$ has proven to be an effective choice for all problems we have tested. Solver complexity is
typically reduced 10--50$\%$, and convergence factors increase a small amount or none. For all $\ell$AIR results presented, all matrices
in the hierarchy are lumped with tolerance $0.001$. Although this is not a fundamental part of the solver, it provides a nice reduction
in complexity.

\section{Numerical results}\label{sec:results}

The model problem considered is the advection-diffusion-reaction
equation, 
\begin{align}
-\nabla\cdot\kappa\nabla u + \beta(x,y)\cdot\nabla u + \sigma u & = f \hspace{3ex}\text{in }\Omega,\label{eq:ad_diff_re}
\end{align}
defined over a convex domain, $\Omega$, with Lipschitz continuous boundary, $\Gamma$. This allows us to focus on a broad
class of PDEs and the effects of nonsymmetry on the solver. Specifically, two cases of
\eqref{eq:ad_diff_re} are considered. Section \ref{sec:results:supg} considers a divergence-free recirculating flow with Dirichlet
boundary conditions, discretized using an SUPG method. This problem comes up fairly often, and is a good initial test; however, in the
hyperbolic limit of no diffusion, the recirculating flow problem is not well-posed in that limit. In this sense, the recirculating flow can be thought
of as a diffusion problem with added advection. In order to consider the hyperbolic limit (and most nonsymmetric case), Section
\ref{sec:results:dgu} then considers an
upwind discontinuous Galerkin (DGu) discretization of an advection-diffusion-reaction equation with Dirichlet inflow boundaries, Neumann outflow
boundaries, and a velocity field $\beta$ with no closed curves or stationary points \cite{Ayuso:2009it,Sun:2005hm}. In the hyperbolic limit
of no diffusion, this problem reduces to the steady-state transport equation, and can be thought of as steady-state transport with added diffusion.
Further details on the discretization and results can be found in Section \ref{sec:results:supg}. Discretizations are generated using the
Dolfin finite element package \cite{dolfin}.

All results presented here use AMG V-cycles as a preconditioner for GMRES.
Coarsening is done using a classical AMG CF-splitting, with no second pass \cite{ruge:1987}. A second pass is not used because it is
not well suited for parallel environments (for example, it is not used in Hypre \cite{Henson:2002vk}), and an algorithm amenable to parallelization
is one of the important features of $\ell$AIR. The lack of a second pass in coarsening to adjust C-points and F-points is accounted for through modified 
interpolation routines \cite{DeSterck:2006et} that are used here. Strong connections for coarsening and the interpolation and restriction
neighborhoods are determined using a classical strength of connection (SOC) based on a hard minimum:
\begin{align*}
\mathcal{N}_i = \Big\{ j \text{ $|$ } i\neq j, -a_{ij} \geq \theta\max_{k\neq i} |a_{ik}| \Big\},
\end{align*}
for some tolerance $\theta$, where $\mathcal{N}_i$ is the neighborhood of strong connections to node $i$. For degree-two
neighborhoods in $\ell$AIR, we only consider F-F-C connections, not F-C-C connections. This is consistent with the goal
of approximating $A_{cf}A_{ff}^{-1}$ in $R_{\textnormal{ideal}}$, which does not contain F-C-C connections.
For coarsening, we use $\theta_C = 0.4$. It is interesting to note that tests indicate $\ell$AIR and classical interpolation methods perform
well with similar values of $\theta$. Thus, strong connections for degree-one interpolation and restriction use $\theta_1 =0.1$, and
strong connections for degree-two interpolation and restriction use $\theta_2 = 0.2$. The larger $\theta$ for degree-two operators is
meant to limit fill-in of the sparsity pattern. 

All results based on $\ell$AIR use one iteration of F-F-C Jacobi relaxation following coarse-grid correction, corresponding to two iterations of 
F-relaxation followed by one iteration of C-relaxation. In \cite{amgir} and in theory, only F-relaxation is discussed. In fact, for almost all
problems, F-relaxation is sufficient and adding one iteration of C-relaxation does
not improve convergence. However, for a few cases, typically either higher-order finite elements and/or diffusion-dominated problems,
one iteration of C-relaxation is necessary for good convergence. Thus, in order to demonstrate the method as robust with minimal parameter
tuning, we use F-F-C relaxation for all results. Classical AMG does not perform well with just F-relaxation, requiring a global relaxation scheme
for optimal results. To ensure the algorithms are similar from a comparative perspective, classical AMG methods are run with one
pre- and post-relaxation sweep of weighted Jacobi. 

In results shown below, the methods used to build restriction and interpolation are denoted as a pair $(R_{\textnormal{build}}, P_{\textnormal{build}})$, respectively. $\ell$AIR$_1$ refers to degree-one $\ell$AIR
and $\ell$AIR$_2$ to degree-two $\ell$AIR, and likewise for AMG$_1$ and AMG$_2$, specifically referring to modified classical interpolation 
(Eq. (4.4) in \cite{DeSterck:2006et}) and Extended$+i$ interpolation (Eqs. (4.10--4.11) in \cite{DeSterck:2008fc}), respectively. One-point
interpolation introduced in \cite{amgir}, where each F-point is interpolated by value from its strongest C neighbor, is denoted 1P,
and letting $R:=P^T$ for a Galerkin coarse grid is denoted $P^T$. All problems are solved to a $10^{-12}$ relative residual tolerance
with zero right-hand side and random initial guess. The approximate spatial mesh size is denoted $h$. 

Finally, we introduce the \textit{work-unit-per-digit-of-accuray} (WPD) as an objective measure of the efficiency of an AMG solver. One
\textit{work unit} (WU) measures the cost in floating-point operations (FLOPs) to perform a sparse matrix-vector multiply with the original
matrix. Other operations in the AMG solve are then measured with respect to this metric, and the \textit{cycle-complexity} (CC) is defined
as the total number of WUs to perform a single AMG cycle, including relaxation, computing the residual, restricting to the coarse grid,
and interpolating a correction. The relative efficiency of an AMG solver is then based on the cost of a single iteration and the convergence
factor $\rho$. One such measure is the total number of WUs to decrease the residual by one order of magnitude, that is, the
work-unit-per-digit-of-accuracy, which is given as
\begin{align*}
\chi_{wpd} := -\frac{CC}{\log_{10}\rho}.
\end{align*}
This measure is used to compare the performance of different solvers discussed here. Note that this does not include FLOPs
required in the setup phase. Although the setup phase can be a useful measure at times, here it is a poor reflection of performance
with respect to changes in problem size, or moving to a parallel environment. In particular, $\ell$AIR requires a small dense solve to build each
row of $R$, with size given by the local size of the restriction neighborhood (see \eqref{eq:system}), and cost which scales cubicly with size.
However, this system does not increase with problem size, and
is also strictly local and, thus, well suited for a parallel environment. A parallel implementation in the \textit{hypre} \cite{Falgout:2002vu}
library and a parallel performance model are ongoing work, but outside the scope of this paper.

\subsection{SUPG and Recirculating flow}\label{sec:results:supg}

One of the most popular discretizations for flow-problems is a stabilized upwind discretization, in particular, the \textit{streamline
upwind Petrov-Galerkin} (SUPG) finite element discretization, where artificial numerical diffusion is added in the direction of the velocity
field for stabilization purposes \cite{Brooks:1982bl}. Our first test problem is a two-dimensional recirculating flow discretized
with an SUPG discretization on a random, triangular, unstructured mesh. The continuous problem on domain $\Omega$ with boundary
$\Gamma$, diffusion coefficient $\kappa$, and velocity field $\beta$ is given as
\begin{align*}
-\nabla\cdot\kappa\nabla u + \beta\cdot\nabla u & = f\hspace{3ex} \text{ in }\Omega, \\
u & = 0 \hspace{3ex} \text{ on }\Gamma_0 \\
u & = 1 \hspace{3ex} \text{ on }\Gamma_1,
\end{align*}
where $\Omega = (0,1)\times(0,1)$, $\Gamma_1 = \{(x,y)\text{ $|$ } x=1,y\in[0,1]\text{ or }y=1,x\in[0,1]\}$, and
$\Gamma_0\cup\Gamma_1 = \Gamma$. The velocity field is a divergence-free recirculating flow given by
\begin{align*}
\beta(x,y) = \Big( x(1-x)(2y-1),  -(2x-1)(1-y)y \Big).
\end{align*}
The solution for varying levels of diffusion, $\kappa$, is shown in Figure \ref{fig:supg_adv_diff}.
\begin{figure}[!h]
  \centering
  \begin{subfigure}[b]{0.3\textwidth}
\includegraphics[width=\textwidth]{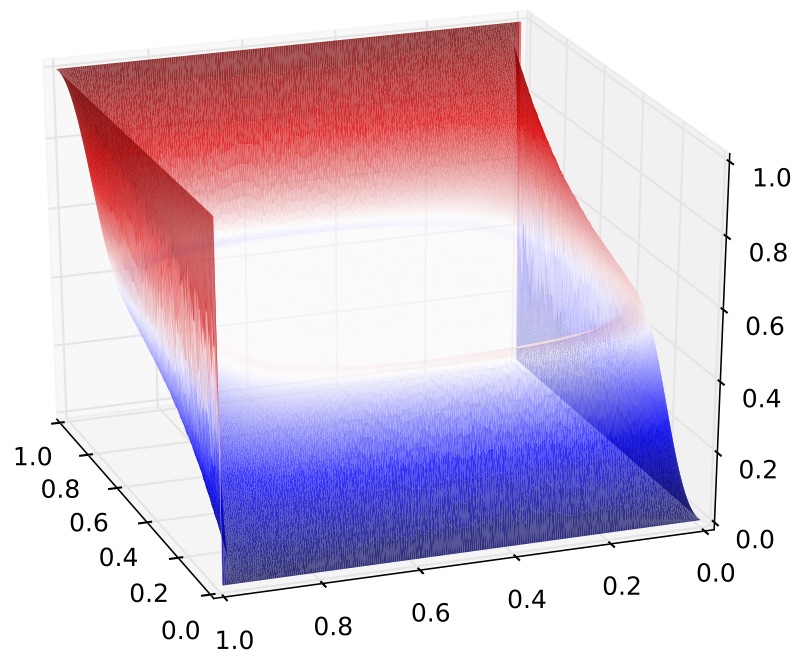}
    \caption{$\kappa = 10^{-4}$}
  \end{subfigure}
  \begin{subfigure}[b]{0.3\textwidth}
\includegraphics[width=\textwidth]{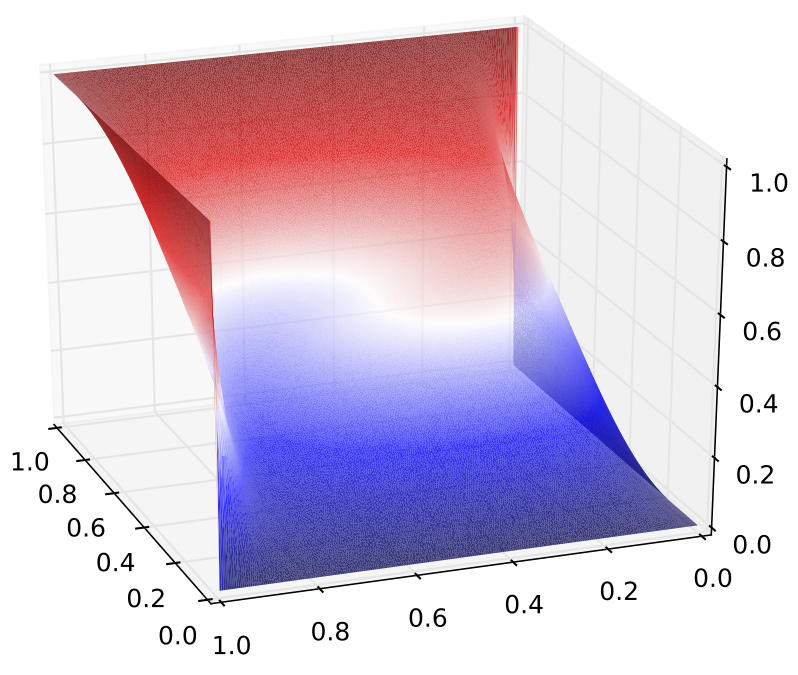}
    \caption{$\kappa = h = 0.005$}
  \end{subfigure}
  \begin{subfigure}[b]{0.3\textwidth}
\includegraphics[width=\textwidth]{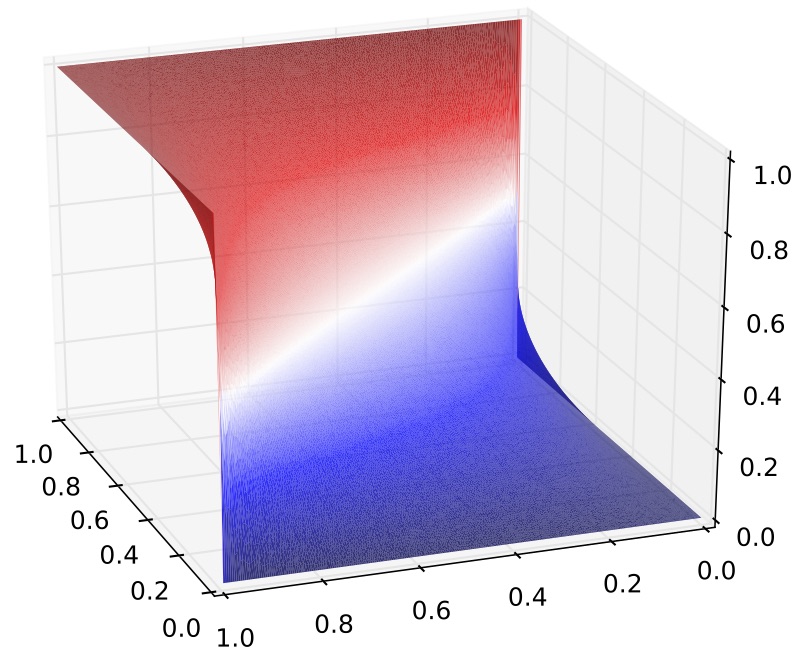}
    \caption{$\kappa = 1$}
  \end{subfigure}
  \caption{Solution of SUPG discretization of recirculating flow with varying
  		diffusion coefficients, $\kappa \in\{10^{-4},h,1\}$, representing the advection-dominated, equal advection
		and diffusion, and diffusion-dominated cases, respectively. }
\label{fig:supg_adv_diff}
\end{figure}

Here, results of $\ell$AIR and classical AMG applied to the recirculating flow are presented. In this case, as $\kappa \to 0$, the
problem is not well-posed and, in particular, the matrix becomes singular. Because of this, results are not expected to
be good for $\kappa \approx 0$. As an example, Table \ref{tab:supg_cond} shows the approximate condition number of
the matrices $A$ and $A_{ff}$ for $\kappa\in[10^{-10},1]$ and $h = \frac{1}{70}$. A pure diffusion discretization should have
an approximate condition number of $\frac{1}{h^2}$, while a pure advection discretization should have a condition number of approximately
$\frac{1}{h}$. For $\kappa = 1$, the conditioning of the SUPG discretization is close to the expected $\frac{1}{h^2}$, but as
$\kappa \to 0$, cond$(A) \to \infty$ as opposed to $\frac{1}{h}$, indicating a singular matrix.

\begin{table}[!h]
\centering
\begin{tabular}{| c | c c c c c c c |}\Xhline{1.25pt}
$\kappa$ & $10^{-10} $& $10^{-7}$ & $10^{-5}$ & 0.001 &  0.01 &  0.1 & 1 \\\hline
cond$(A)$ & $7.1\cdot10^9$ &  $4.5\cdot10^9$ & $1.23\cdot10^8$ & $1.23\cdot10^6$ & 123,300 & 12,332 & 2,279 \\
cond$(A_{ff})$ & 197,562 & 194,147 & 73,183 & 2,962 & 331 & 36 & 6.7 \\\Xhline{1.25pt}
\end{tabular} 
\caption{Condition number of $A$ and $A_{ff}$ as a function of diffusion coefficient $\kappa$ for degree-one finite elements
	on an unstructured mesh, with $h\approx \frac{1}{70}$ and 6300 DOFs (computed using NumPy \cite{numpy}).}
\label{tab:supg_cond}
\end{table}

$\ell$AIR performs well for reasonable values of $\kappa$. For linear finite elements (Figure \ref{fig:supg_kappa}),
$\ell$AIR slightly outperforms AMG for all $\kappa$ in terms of WPD and, moreover, is able to effectively solve the problem for
diffusion coefficients 1--2 orders of magnitude smaller than AMG, corresponding to matrix condition numbers likely two orders
of magnitude larger (Table \ref{tab:supg_cond}). Although results demonstrate $\ell$AIR as a robust solver and an improvement
over existing methods, we are not able to explore the highly nonsymmetric setting for which $\ell$AIR is designed because the
problem is not well-posed. This leads us to consider a time-dependent recirculating flow, where the linear system associated with
pure advection is well posed (Section \ref{sec:results:supg:time}), followed by a different variation of steady-state
advection-diffusion-reaction that is well-posed in the hyperbolic limit (Section \ref{sec:results:dgu}). 

\begin{figure}[!h]
  \centering
\captionsetup[subfigure]{justification=centering}
  \begin{subfigure}[b]{0.45\textwidth}
\includegraphics[width=\textwidth]{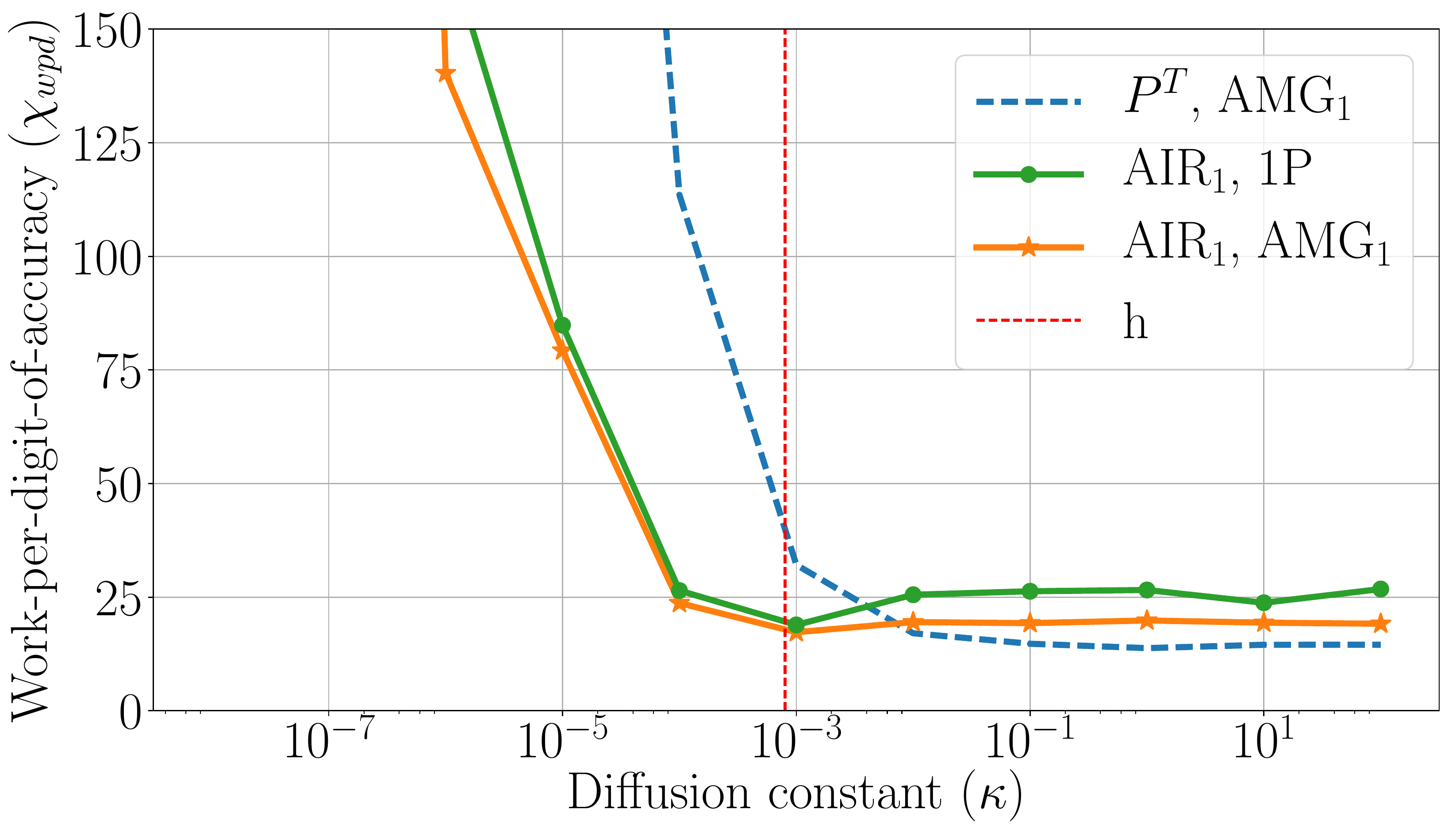}
    \caption{Degree-one finite element,\\degree-one interpolation/restriction}
  \end{subfigure}
  \begin{subfigure}[b]{0.45\textwidth}
\includegraphics[width=\textwidth]{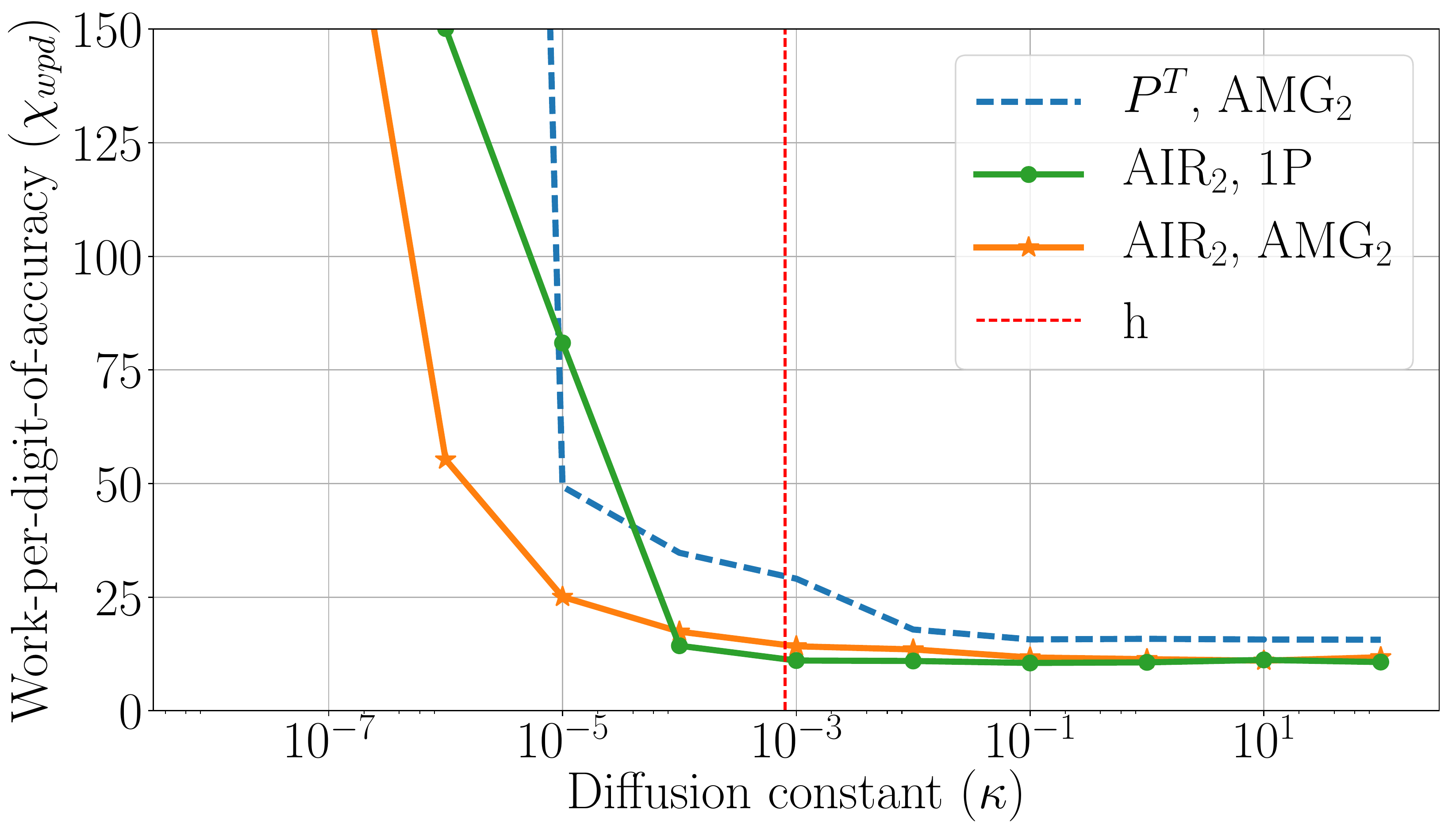}
    \caption{Degree-one finite element,\\degree-two interpolation/restriction}
  \end{subfigure}
     \begin{subfigure}[b]{0.45\textwidth}
\includegraphics[width=\textwidth]{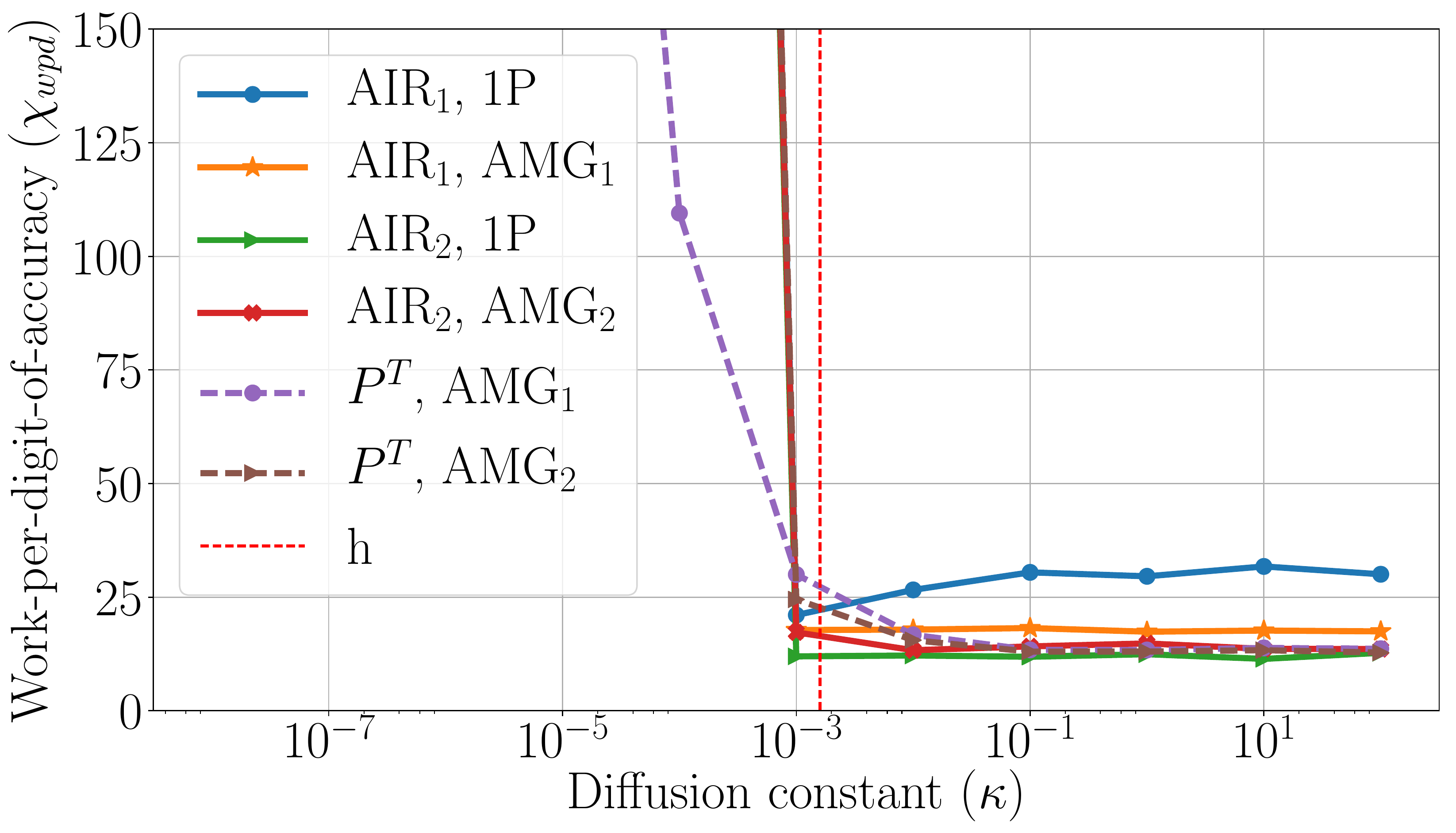}
    \caption{Degree-two finite elements}
  \end{subfigure}
  \begin{subfigure}[b]{0.45\textwidth}
\includegraphics[width=\textwidth]{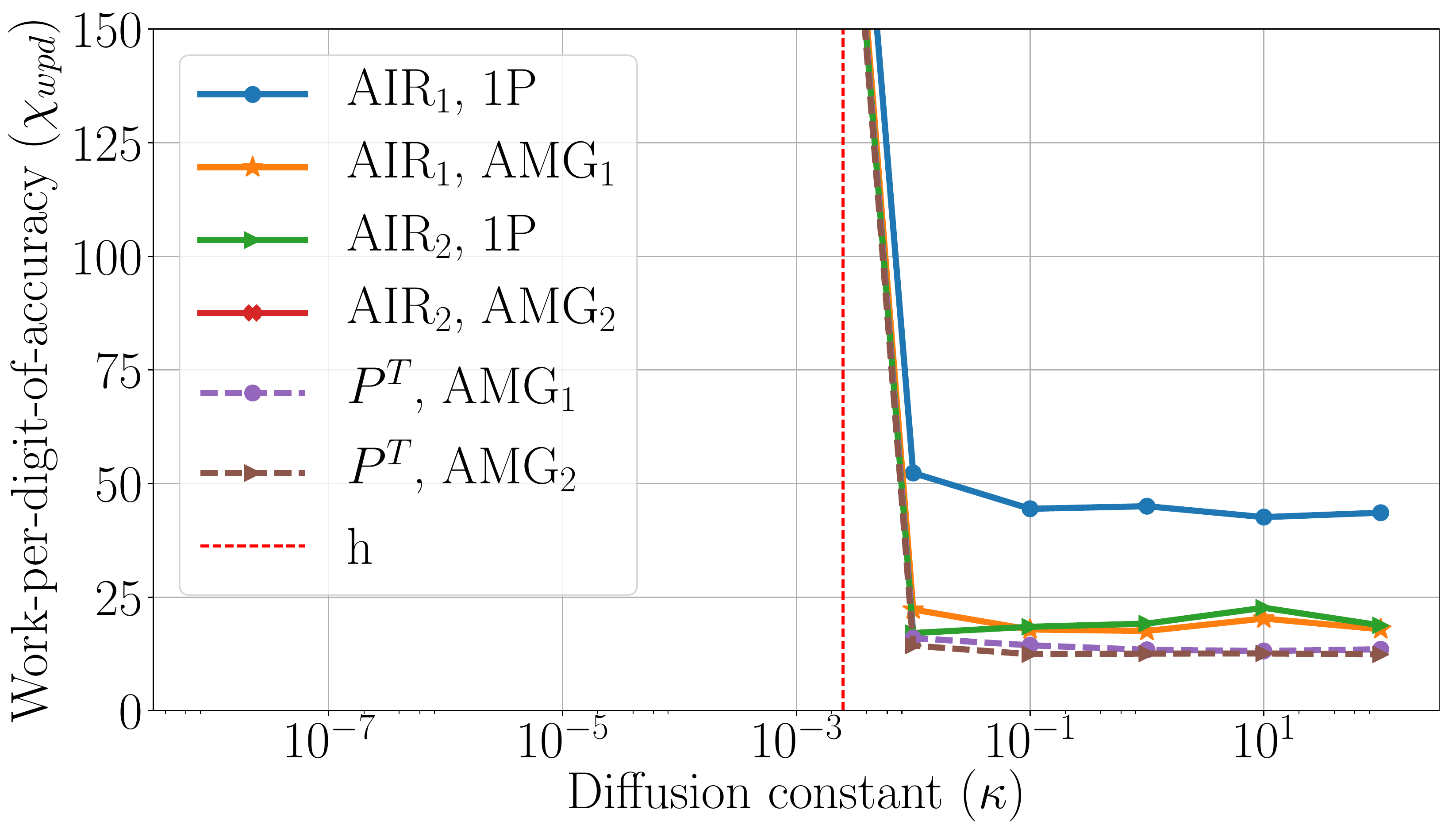}
    \caption{Degree-three finite elements}
  \end{subfigure} 
    \caption{WPD for $\ell$AIR and classical AMG applied to an SUPG discretization of a recirculating flow on
  	an unstructured mesh, using degree 1--3 finite elements. Spatial resolution is given by $h = \frac{1}{1250}, \frac{1}{625},\frac{1}{400}$,
	respectively, leading to $\approx 2\cdot 10^6$ DOFs for each problem. Classical AMG
	($R := P^T$) results are shown in a dotted line, and variations in $\ell$AIR in solid lines. As a reference, the typical
	convergence factor for $\chi_{wpd}$ of 20--25 WUs is $\rho\approx0.5$. }
\label{fig:supg_kappa}
\end{figure}

\subsubsection{Time-dependent recirculating flow}\label{sec:results:supg:time}

So far, the steady-state advection-diffusion equation has been considered which, in the case of a recirculating flow, is not well-posed
for the purely advective case. However, an alternative approach is to consider the time-dependent advection-diffusion equation:
\begin{align}
u_t - \nabla\cdot\kappa\nabla u + \beta\cdot\nabla u & = f\hspace{3ex} \text{ in }\Omega,\label{eq:supg_spacetime}
\end{align}
with spatial boundary conditions as before and some initial condition, $u = u_0$ at time $t = 0$. As an example, consider using a first-order
implicit backward Euler discretization in time and SUPG in space. Let $\mathcal{S}$ denote the discrete matrix associated with an SUPG
spatial discretization of \eqref{eq:supg_spacetime} and $u^{(i)}$ denote the solution at the $i$th time step with step size $\delta t$. Each time step
then consists of solving the linear system
\begin{align}
(I + \delta t \mathcal{S})u^{(i+1)} = u^{(i)} + \delta tf. \label{eq:supg_timestep}
\end{align}

Table \ref{tab:time_step} shows the average convergence factor of $\ell$AIR as applied to \eqref{eq:supg_timestep} for various parameter choices.
Here, $\ell$AIR proves to be an effective solver for implicit time-stepping of a recirculating flow, even with large time steps, pure advection, and
higher-order finite elements. In practice, considerations must be taken for an appropriate combination of spatial and temporal discretization
(e.g., CFL condition, $h$ vs. $\delta t$, etc.); nevertheless, results indicate that $\ell$AIR is a fast and robust solver for discretizations using
implicit time-stepping. Note that, for example, $\delta_t = dx^2$ in the advective case is a trivial system to solve because the linear system
is diagonally dominant like $1/dx$, but such results are included for completeness. Interestingly with $\ell$AIR, time-stepping in the
advection-dominated regime is actually faster in all cases than the diffusion-dominated regime, likely due to the conditioning of an
advective matrix scaling like $1/dx$ and diffusive like $1/dx^2$.

{
\begin{table}[!h]
\centering
\begin{tabular}{| c || c c c | c c c | c c c| c c c |}\Xhline{1.25pt}
& \multicolumn{6}{c|}{Degree-one elements} & \multicolumn{6}{c|}{Degree-two elements} \\\hline
$\kappa$ & \multicolumn{3}{c|}{0} & \multicolumn{3}{c|}{1} & \multicolumn{3}{c|}{0} & \multicolumn{3}{c|}{1} \\\hline
$\delta t$ & $dx^2$ & $dx$ & $\sqrt{dx}$ & $dx^2$ & $dx$ & $\sqrt{dx}$ & $dx^2$ & $dx$ & $\sqrt{dx}$ & $dx^2$ & $dx$ & $\sqrt{dx}$ \\ \hline
CF & $10^{-8}$ & 0.01 & 0.29 & 0.1 & 0.38 & 0.49 & $10^{-8}$ & 0.01 & 0.27 & 0.14 & 0.36 & 0.45 \\
WPD & 0.75 & 3.0 & 12.5 & 5.8 & 13.1 & 17.4 & 0.74 & 3.3 & 11.0 & 6.2 & 11.6 & 14.8 \\\Xhline{1.25pt}
\end{tabular} 
\caption{Convergence factor (CF) and WPD of $\ell$AIR applied to \eqref{eq:supg_timestep} with various choices of time step, 
diffusion coefficient, and initial guess, for $dx = \frac{1}{1000}$ and about $1.25\cdot10^6$ DOFs.
}
\label{tab:time_step}
\end{table}
}

\subsection{Upwind-DG and advection-diffusion-reaction}\label{sec:results:dgu}

\begin{figure}[!h]
  \centering
  \begin{subfigure}[b]{0.3\textwidth}
\includegraphics[width=\textwidth]{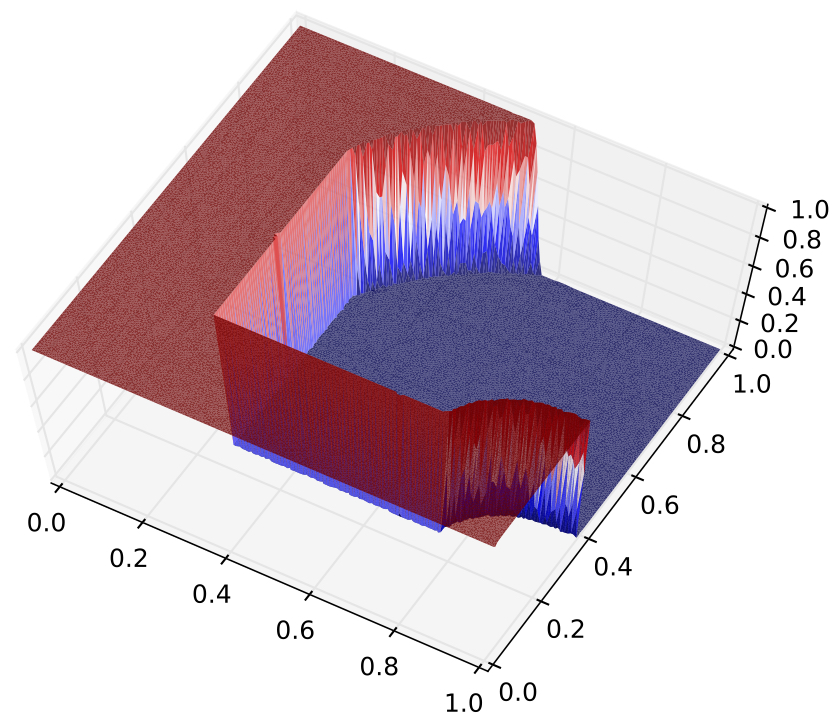}
    \caption{$\kappa = 0$}
  \end{subfigure}
  \begin{subfigure}[b]{0.3\textwidth}
\includegraphics[width=\textwidth]{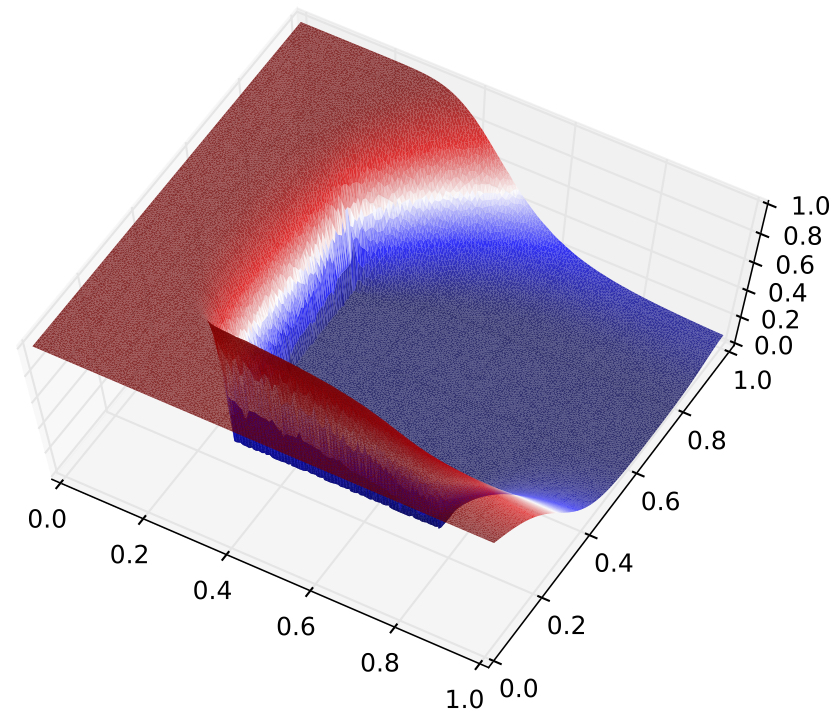}
    \caption{$\kappa = h = 0.005$}
  \end{subfigure}
  \begin{subfigure}[b]{0.3\textwidth}
\includegraphics[width=\textwidth]{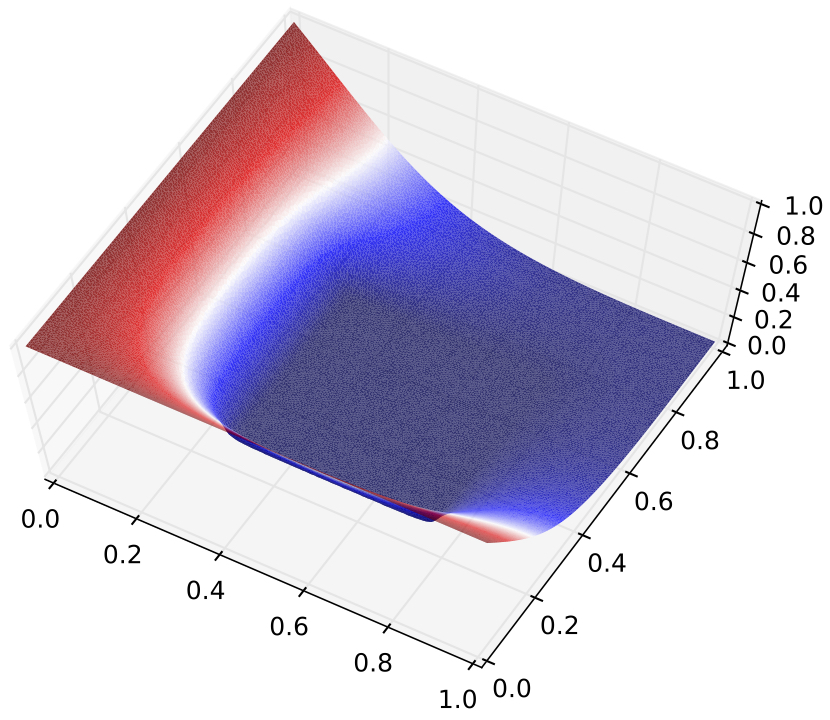}
    \caption{$\kappa = 1$}
  \end{subfigure}
  \caption{Solution of DGu discretization of the specified advection-diffusion-reaction equation for varying
  		diffusion coefficients, $\kappa \in\{0,h,1\}$, representing the purely advective, equal advection
		and diffusion, and diffusion-dominated cases, respectively.}
\label{fig:dg_adv_diff_re}
\end{figure}

The second case of advection-diffusion-reaction to be considered is a more general flow problem, written in conservative form
\cite{Ayuso:2009it,Sun:2005hm}. Let $\Gamma = \partial\Omega$ be the boundary of our domain, with inflow boundary
$\Gamma_{in} = \{x\in\Gamma \text{ $|$ } \beta(x)\cdot\mathbf{n}(x) < 0\}$ and outflow boundary 
$\Gamma_{out} = \{x\in\Gamma \text{ $|$ } \beta(x)\cdot\mathbf{n}(x) \geq 0\}$. Then, consider
\begin{align*}
\nabla\cdot \mathbf{\sigma}(u) + \gamma u & = f \hspace{4.5ex}\text{in }\Omega \\
u & = g_D \hspace{3ex}\text{on }\Gamma_{in} \\
- \kappa\nabla u\cdot\mathbf{n} & = g_N \hspace{3ex}\text{on }\Gamma_{out},
\end{align*}
where $\mathbf{\sigma} := -\kappa\nabla u+ \beta u$ is the physical flux. Additional assumptions that $\beta(x)$ has
no closed curves and that $|\beta(x)|\neq 0$ for all $x\in\Omega$ are made to ensure that the hyperbolic limit of steady-state transport for $\kappa = 0$
is well-posed. In the spirit of steady-state transport, $\gamma$ represents the \textit{total cross section} and is taken to be
piecewise constant over the domain, varying by multiple orders of magnitude, and representing the thickness of the background material. 
Specifically, let the domain be $\Omega = (0,1)\times(0,1)$, with $\Gamma_D = \Gamma^{-}$ being the south and west boundaries
and $\Gamma_N = \Gamma^{+}$ the north and east boundaries. Then, let $g_N(x,y) = 0$, $g_D(x,y) = 1$, and
\begin{align*}
\gamma(x,y) = \begin{cases} 10^{4} & x,y\in(0.25,0.75) \\ 10^{-4} & \text{otherwise}\end{cases}, \hspace{4ex}
\beta(x,y) =  \Big( y^2, \cos(\sfrac{\pi x}{2})^2 \Big).
\end{align*}
Such choices correspond to a curved velocity field, facing straight north at $y=0$ and straight east at $x = 1$, with
a total cross section that is thick ($\gamma \gg 1$) in a block in the center of the domain and thin ($\gamma \ll 1$)
outside of this block. An upwind discontinuous Galerkin (DGu) formulation is used to discretize (Eq. 3.4 in \cite{Ayuso:2009it})
on an unstructured mesh, and the solution for varying levels of diffusion is shown in Figure \ref{fig:dg_adv_diff_re}.

\tcr{Here, the highly nonsymmetric (block triangular) hyperbolic limit, in which classical AMG proves unable to converge,
is considered. In order to compare with other solvers specifically developed for such problems, $\ell$AIR is compared with
a nonsymmetric smoothed aggregation (NSA) algorithm \cite{Sala:2008cv}. Although NSA is able to solve most problems
that are addressed here, in terms of convergence factor and WPD, $\ell$AIR proves to be superior.} Note that geometric
multigrid (GMG), when applicable, is often faster than all algebraic approaches. However, here we are working on
unstructured meshes and without line relaxation, which makes comparisons with GMG difficult.

\subsubsection{DG block structure}\label{sec:results:dgu:block}

One of the unique features of a DG discretization is the inherent block structure associated with it, where each element
comprises a set or block of DOFs in the matrix. In \cite{amgir}, the resulting linear system was scaled by the block-diagonal
inverse, $A\mathbf{x} = \mathbf{b} \mapsto D_B^{-1}A\mathbf{x}=D_B^{-1}\mathbf{b}$, where $D_B$ is the block-diagonal
of $A$, in order to maintain a lower-triangular structure for the Neumann series approximation to $A_{ff}^{-1}$. In adding
diffusion (that is, symmetric components to the matrix) and using generalized $\ell$AIR that does not depend on a triangular
structure to the matrix, it is not obvious that scaling by the block-diagonal inverse, or generally utilizing the block structure,
is important. However, using the block structure significantly improves convergence of $\ell$AIR for all advection-dominated
problems. One possible explanation is the effect of block-diagonal scaling on the condition number of the matrix:\footnote{Due
to the significant difference in conditioning of $A$ and $D_B^{-1}A$, a detailed analysis connecting the finite element theory and
linear algebra in this regard is likely in order, but outside the scope of this paper.} 

For advection-dominated problems with large discontinuities in $\gamma$, scaling by the block-diagonal inverse maps a near-singular
matrix to one conditioned
about as we would expect for a pure advection problem ($\frac{1}{h}$). Such improvement of conditioning of the linear system is good
for iterative solvers in general. However, the importance to $\ell$AIR is specifically that if $D_B^{-1}A$ is well-conditioned, then we should
be able to pick F-points such that the resulting submatrix of F-F connections is also well-conditioned. Then (i) we should be
able to form a good approximation to $-A_{cf}A_{ff}^{-1}$ in ideal restriction, and (ii) F-relaxation should converge well. The
effect of scaling by $D_B^{-1}$ on relaxation can be seen in Table \ref{tab:scale_relaxation}, which shows the average 
convergence factor of Jacobi relaxation on the larger matrices, $A$ and $D_B^{-1}A$, and the F-F submatrices, $A_{ff}$
and $(D_B^{-1}A)_{ff}$. In the advection-dominated case, Jacobi relaxation does not converge on the submatrix $A_{ff}$,
and diverges as applied to $A$. After scaling by $D_B^{-1}$, the pure advection case achieves convergence factors of
$\approx 0.1$ on the F-F block. 

\begin{table}[!h]
\centering
\begin{tabular}{| c | c c c c c c c c c |}\Xhline{1.25pt}
$\kappa$ & 0 & $10^{-6}$ & $10^{-4}$ & 0.001 &  0.01 &  0.1 & 1 & 10 & 100 \\\hline
cond$(A)$ & 1164172 &1074536 & 187700 & 92619 & 37836 & 7422 & 1132 & 413 & 684 \\ 
cond$(D_B^{-1}A)$ & 86.7 & 86.7 & 85.2 & 77.9 & 125 & 228 & 296 & 359 & 619 \\\Xhline{1.25pt}
\end{tabular} 
\caption{Condition number of $A$ and $D_B^{-1}A$ as a function of diffusion coefficient $\kappa$ for degree-one finite elements
	on an unstructured mesh, with $h\approx \frac{1}{25}$ and 3030 DOFs (computed using NumPy \cite{numpy}).}
\label{tab:condition}
\end{table}
\begin{table}[!th]
\centering
\begin{tabular}{| c | c c c c c c c c c c c c |}\Xhline{1.25pt}
$\kappa$ & 0 & $10^{-8}$ & $10^{-7}$ & $10^{-6}$ & $10^{-5}$ & $10^{-4}$ & 0.001 &  0.01 &  0.1 & 1 & 10 & 100 \\\hline
CF$(A)$ & 1.58 & 1.58 & 1.58 & 1.57 & 1.54 & 1.28 & 0.99 & 0.97 & 0.94 & 0.94 & 0.94 & 0.93 \\
CF$(D_B^{-1}A)$ & 0.95 & 0.95 & 0.95 & 0.95 & 0.94 & 0.91 & 0.89 & 0.88 & 0.89 & 0.89 & 0.88 & 0.88 \\
CF$(A_{ff})$ & 1.00 & 1.00 & 1.00 & 1.00 & 0.98 & 0.94 & 0.83 & 0.65 & 0.67 & 0.70 & 0.71 & 0.73 \\
CF$((D_B^{-1}A)_{ff})$ & 0.12 & 0.33 & 0.38 & 0.47 & 0.47 & 0.51 & 0.52 & 0.49 & 0.54 & 0.50 & 0.50 & 0.50 \\\Xhline{1.25pt}
\end{tabular} 
\caption{Average convergence factor of 50 iterations of Jacobi relaxation on $A$, $A_{ff}$, $D_B^{-1}A$, and $(D_B^{-1}A)_{ff}$.
	Degree-one finite elements are used on an unstructured mesh, with $h= \frac{1}{500}$, approximately $2\cdot 10^6$ DOFs,
	and $\kappa \in[0,100]$.}
\label{tab:scale_relaxation}
\end{table}

Tables \ref{tab:condition} and \ref{tab:scale_relaxation} indicate that the DG block-structure can be an important feature for a solver to
consider, particularly for advection-dominated problems. However, block-matrix structure can be handled in a number of ways. Three
natural approaches are (i) treating the matrix as is, without considering block structure, (ii) scaling the system by the block-diagonal
inverse, and (iii) treating the entire AMG hierarchy nodally, that is, computing the SOC, CF-splitting, and transfer operators by block. 
One benefit of $\ell$AIR is, similar to smoothed aggregation (SA) multigrid methods, there is a natural approach to handling block structure
by coarsening by block and building $R$ by block. This is in contrast to classical AMG methods, which typically struggle with matrices
of block structure. If the block structure is not accounted for, no combinations of $\ell$AIR and AMG converge in the advection-dominated
case, which is the focus of this paper. For advection-dominated problems, treating the system nodally leads to similar convergence
factors to those obtained on the block-diagonally scaled matrix, with slightly higher setup wall-clock times because a block neighborhood
in $\ell$AIR is larger than a scalar neighborhood and, thus, the dense linear systems larger. However, scaling by the block inverse
is detrimental to convergence in the diffusion-dominated case, particularly for high-order finite elements, likely because it is mapping
a near-symmetric matrix to be highly nonsymmetric. For that reason, the remainder of this section uses block $\ell$AIR as the most effective
general solver. Because block classical AMG interpolation methods are not well developed, one-point interpolation is used, which
naturally extends to the block setting. 

\begin{remark}
With careful tuning of parameters, scalar, classical AMG methods do converge on most problems that are strongly diffusion dominated,
$\kappa \gg 1/h$. However, such results were found to be very sensitive to parameter tuning, particularly for increased finite-element
order and problem dimension, and typically did not outperform NSA or $\ell$AIR. Classical AMG was unable to converge for all
advection-dominated problems tested, except for linear finite elements in two dimensions, with $\kappa \approx 1/h$, where
convergence factors were still worse than $\ell$AIR or NSA. 
\end{remark}

\subsubsection{$\ell$AIR and convergence as a function of $\kappa$}\label{sec:results:dgu:kappa}

Figure \ref{fig:dg_kappa} shows WPD of block-$\ell$AIR and NSA-with-block-diagonal-scaling applied to the linear systems
corresponding to $\kappa \in[10^{-10},100]$. $\ell$AIR performs well in all advection-dominated cases, including high-order
elements and two and three dimensions, and is $3-8\times$ faster than NSA in terms of WPD for parameters tested
here.\footnote{Moderate optimization of parameters has been done for both solvers; it is possible results could be
improved in either case, in particular if focused on a specific problem} In all cases, performance of block NSA proved to
be worse than NSA, and results are not included. It is also interesting to note that applying scalar $\ell$AIR to the block-diagonally
scaled system performs as well or better in highly advection-dominated cases. However, as diffusion is introduced, even for 
$\kappa > 10^{-5}$, convergence of scalar $\ell$AIR degrades substantially with high-order elements and three dimensions. 

Note that, using a block solver or not, diffusion-dominated problems in 3d appear to be difficult for $\ell$AIR and SA. That
high-order and high-dimensionality DG discretizations of elliptic problems can be difficult for solvers and AMG is well known. 
This has prompted research into specific modifications for an effective DG solver; in particular, the SA approach developed
for high-order DG discretizations of elliptic problems in \cite{Olson:2011ju} may outperform $\ell$AIR and SA as studied here 
for the 3d diffusion-dominated case. However, in \cite{Olson:2011ju}, W-cycles were necessary for scalable convergence,
again indicating the difficulty of such discretizations. Interestingly, it seems that DG discretizations of advection-dominated
problems can now be solved faster than diffusion-dominated problems, contrary to traditional thought on fast solvers and
classes of PDEs. 

\begin{figure}[!b]
  \centering
  \captionsetup[subfigure]{justification=centering}
  \begin{subfigure}[b]{0.45\textwidth}
\includegraphics[width=\textwidth]{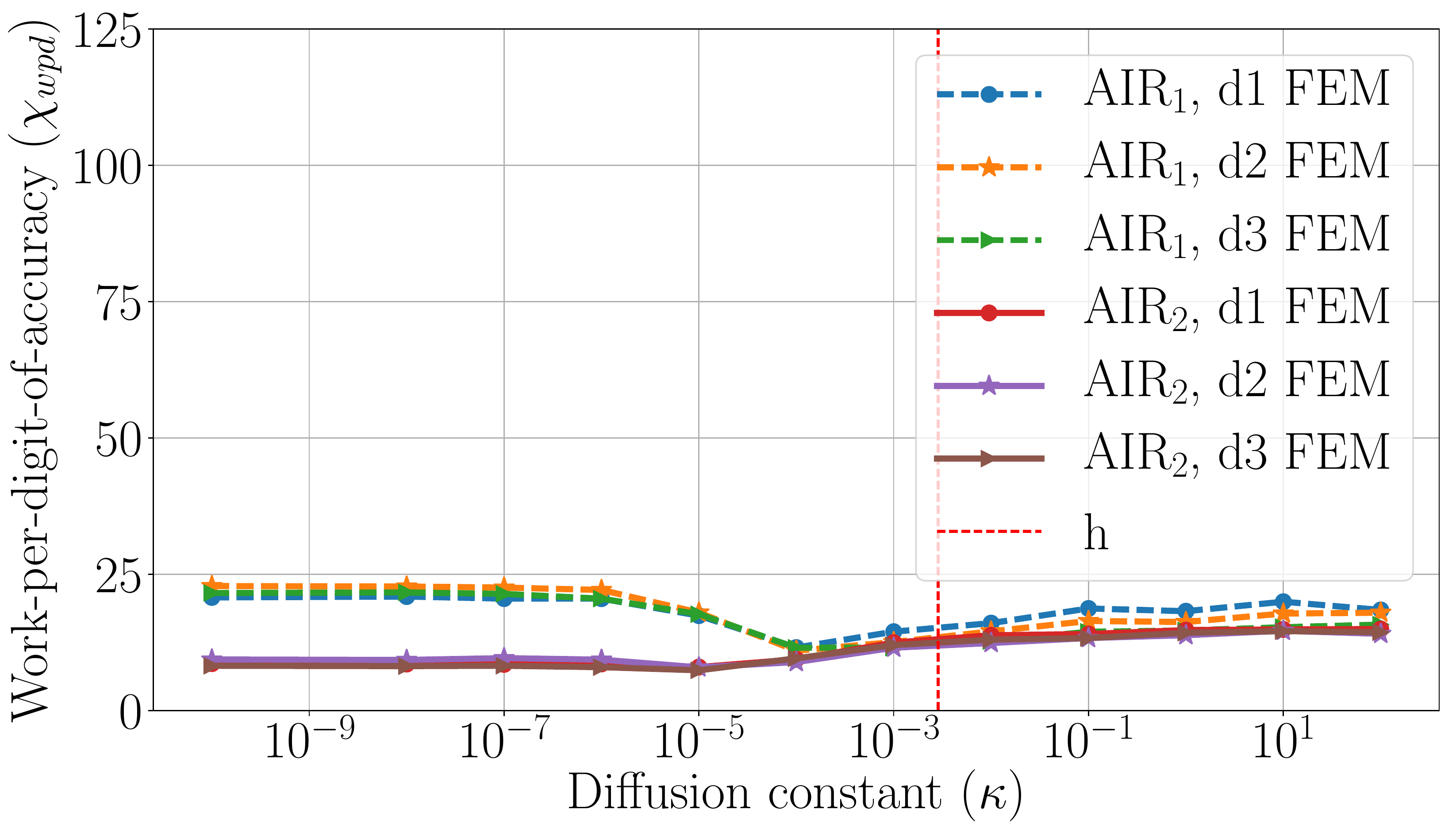}
    \caption{Two dimensions, $\ell$AIR}
   \label{subfig:dg_d1d1}
  \end{subfigure}
  \begin{subfigure}[b]{0.45\textwidth}
\includegraphics[width=\textwidth]{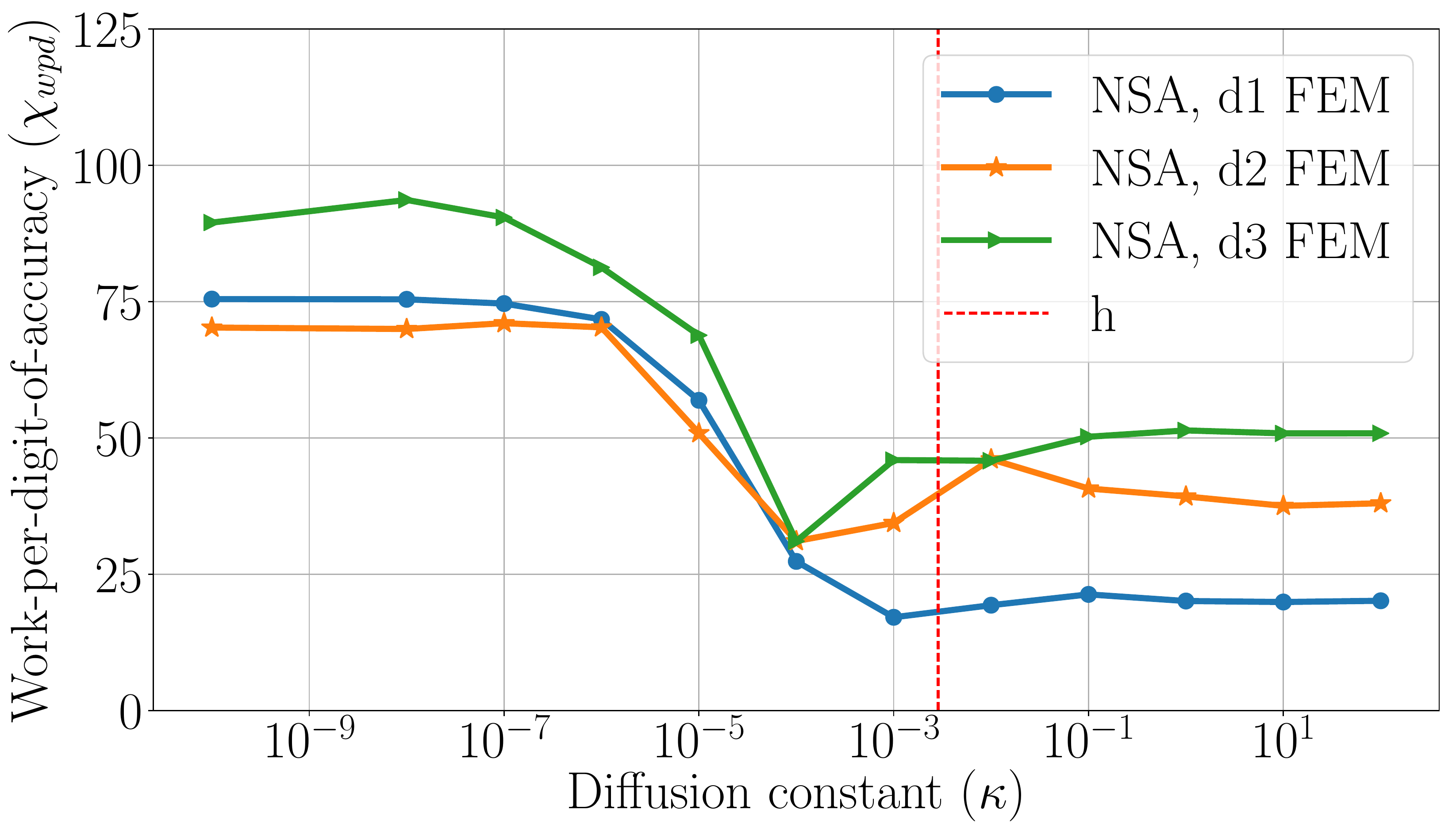}
    \caption{Two dimensions, NSA}
   \label{subfig:dg_d1d2}
  \end{subfigure}
      \begin{subfigure}[b]{0.45\textwidth}
\includegraphics[width=\textwidth]{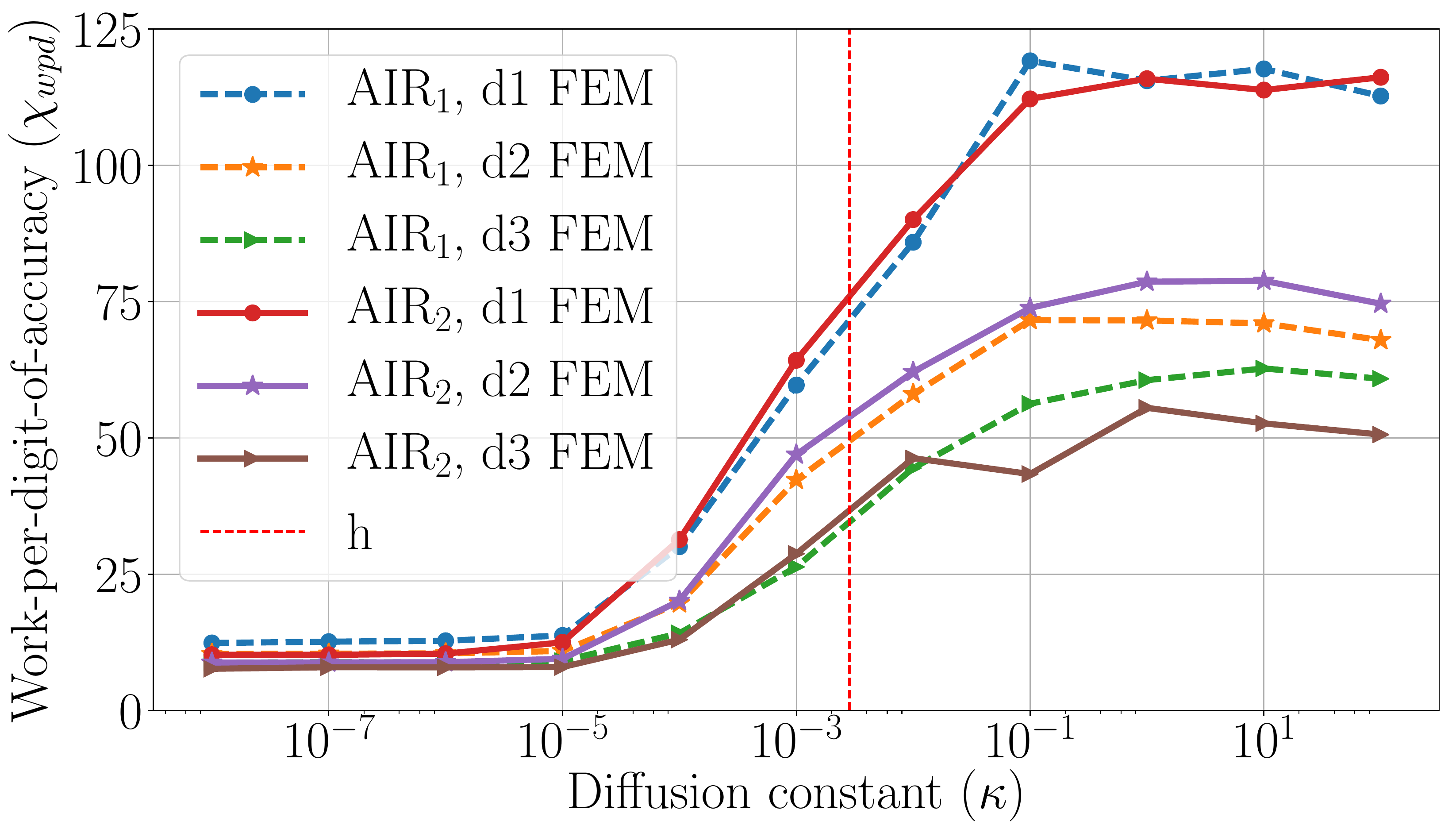}
    \caption{Three dimensions, $\ell$AIR}
     \label{subfig:dg_d2}
\end{subfigure}
  \begin{subfigure}[b]{0.45\textwidth}
\includegraphics[width=\textwidth]{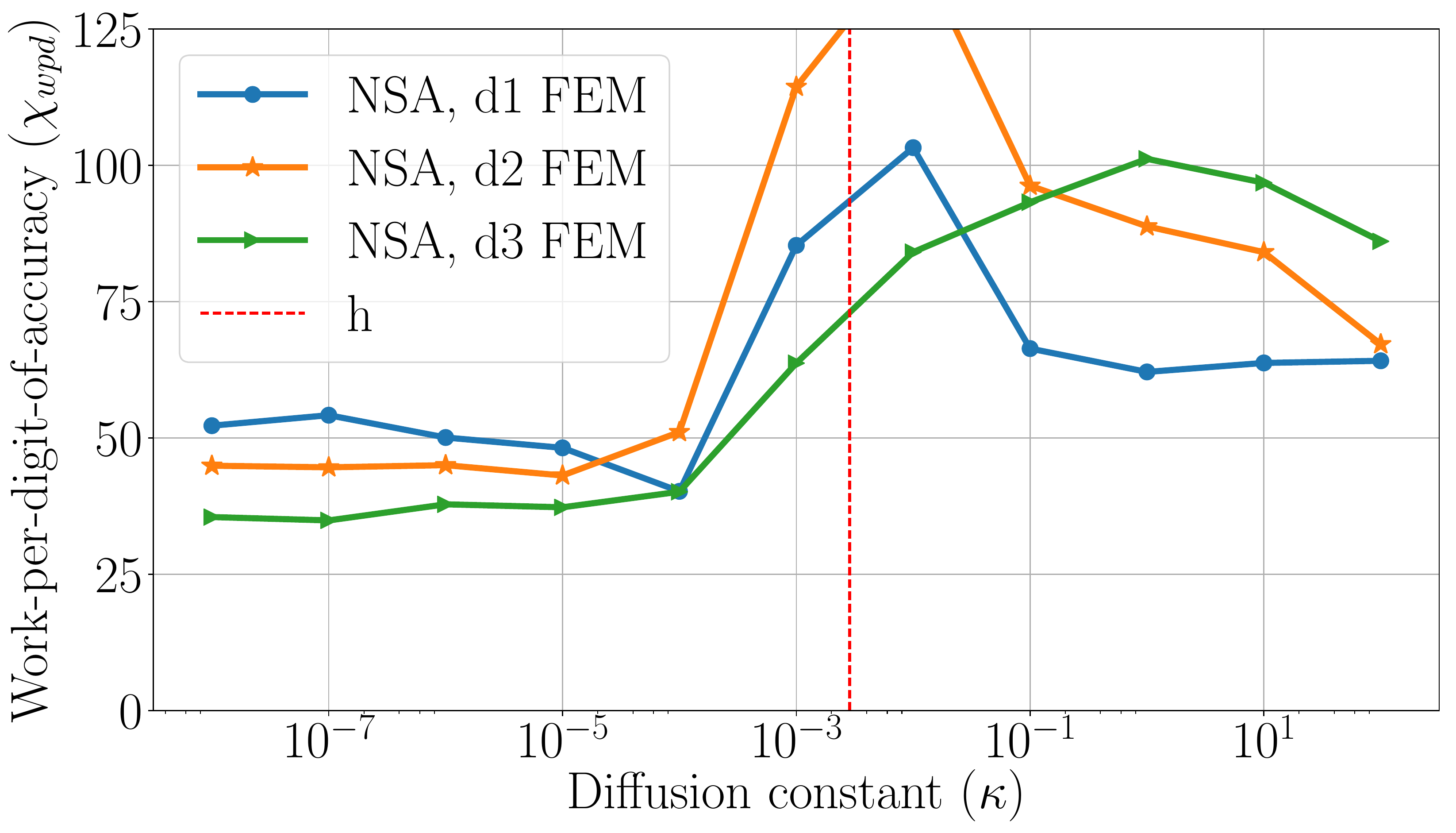}
    \caption{Three dimensions, NSA}
   \label{subfig:dg_d3}
  \end{subfigure}
  \caption{WPD for $\ell$AIR and classical AMG applied to a DGu discretization of advection-diffusion-reaction on
  	an unstructured mesh and degree 1--3 finite elements. The spatial resolution for high-order finite element matrices was
	chosen so that all matrices have approximately the same number of degrees of freedom as the linear element matrix, that
	is, $h = \frac{1}{500}, \frac{1}{350},\frac{1}{275}$, respectively, leading to $\approx 2\cdot 10^6$ DOFs for each problem.}
\label{fig:dg_kappa}
\end{figure}

\section{Overview and conclusions}\label{sec:conc}

Although nonsymmetric linear systems arise often in the study of numerical PDEs, they tend to lack fast and robust solvers.
AMG is often the solver of choice for SPD matrices in high-performance codes and,
here, we present a new variation of AMG based on a local approximation to the ideal restriction operator. The resulting method, $\ell$AIR,
proves to be a fast and robust solver for scalar advection-diffusion-reaction equations. For diffusion-dominated problems, $\ell$AIR is
competitive with classical AMG techniques and, using an SUPG discretization, is able to solve a steady-state recirculating flow problem
with high-order finite elements and an unstructured mesh. In the advection-dominated case, the steady-state recirculating flow is not
well-posed; however, $\ell$AIR proves to be a robust solver for implicit time-stepping applied to a recirculating flow as well as an upwind
DG discretization of advection-diffusion-reaction. For the advection-diffusion-reaction problem discretized by DG,
$\ell$AIR is able to solve high-order discretizations, from strictly advective to diffusion dominated, on unstructured grids, and with
moderate complexity (see Figure \ref{subfig:dg_d1d1}). $\ell$AIR also
consistently outperforms a nonsymmetric smoothed aggregation algorithm, the current state-of-the-art for highly nonsymmetric
problems, in the advection-dominated regime.

A block analysis of AMG in Section \ref{sec:background} provided theoretical motivation for $\ell$AIR. A more recent, companion
paper \cite{amgir} extends this analysis, developing a complete convergence framework for $\ell$AIR, with sufficient conditions for
the $\ell^2$-convergence of error and residual. Building on the convergence framework in \cite{amgir} and encouraging results
shown here, a future research direction of interest is extending $\ell$AIR to systems of PDEs. Systems of PDEs remain something
of an open question for AMG in general; $\ell$AIR provides a robust method for solving scalar advection-dominated problems that
classical AMG struggles with, and this could be of great use for solving systems with strong advection. A few notable systems of
interest include the Euler equations, shallow-water equations, and Navier Stokes.

\bibliographystyle{siamplain}
\bibliography{air.bib}

\end{document}